\theoremstyle{plain}
	\newtheorem{thm}{Theorem}[section]
	\newtheorem{prp}[thm]{Proposition}
	\newtheorem{lem}[thm]{Lemma}
\theoremstyle{definition}
	\newtheorem{dfn}[thm]{Definition}
	\newtheorem{ex}[thm]{Example}
	\newtheorem{question}[thm]{Question}
\theoremstyle{remark}
	\newtheorem{rem}[thm]{Remark}
\newcommand{ \B}{\mathcal{B}}
\newcommand{ \U}{\operatorname{U}}
\newcommand{ \GL}{\operatorname{GL}}
\newcommand{\ind}{\operatorname{ind}}
\newcommand{\prop}{\operatorname{prop}}
\newcommand{\tsr}{\operatorname{tsr}}
\title{Homotopy type of the unitary group of the uniform Roe algebra on $\mathbb{Z}^n$}
\author{Tsuyoshi Kato}
\address{Department of Mathematics, Kyoto University, Kyoto, 606-8502, Japan}
\email{tkato@math.kyoto-u.ac.jp}
\author{Daisuke Kishimoto}
\address{Department of Mathematics, Kyoto University, Kyoto, 606-8502, Japan}
\email{kishi@math.kyoto-u.ac.jp}
\author{Mitsunobu Tsutaya}
\address{Faculty of Mathematics, Kyushu University, Fukuoka, 819-0395, Japan}
\email{tsutaya@math.kyushu-u.ac.jp}
\subjclass[2010]{55Q52 (Primary), 46L80 (Secondary)}
\keywords
{uniform Roe algebra, Roe algebra, unitary group, homotopy type, operator $K$-theory}
\thanks{Kato was supported by JSPS KAKENHI 17K18725
and 17H06461. Kishimoto was supported by JSPS KAKENHI 17K05248 and 19K03473. Tsutaya was supported by JSPS KAKENHI 19K14535}
\begin{document}
\begin{abstract}
We study the homotopy type of the space of the unitary group $\U_1(C^\ast_u(|\mathbb{Z}^n|))$ of the uniform Roe algebra $C^\ast_u(|\mathbb{Z}^n|)$ of $\mathbb{Z}^n$.
We show that the stabilizing map $\U_1(C^\ast_u(|\mathbb{Z}^n|))\to\U_\infty(C^\ast_u(|\mathbb{Z}^n|))$ is a homotopy equivalence.
Moreover, when $n=1,2$, we determine the homotopy type of $\U_1(C^\ast_u(|\mathbb{Z}^n|))$, which is the product of the unitary group $\U_1(C^\ast(|\mathbb{Z}^n|))$ (having the homotopy type of $\U_\infty(\mathbb{C})$ or $\mathbb{Z}\times B\U_\infty(\mathbb{C})$ depending on the parity of $n$) of the Roe algebra $C^\ast(|\mathbb{Z}^n|)$ and rational Eilenberg--MacLane spaces.

\end{abstract}
\maketitle
\section{Introduction}
For a $C^\ast$-algebra $A$, let $\GL_d(A)$ and $\U_d(A)$ denote the space of the invertible and unitary matrices with entries in $A$, respectively.
It is well-known that they always have the same homotopy type.
We will often refer only to $\U_d(A)$ but most statements are valid for $\GL_d(A)$ as well.
There have been a lot of works on the homotopy theory of $\U_d(A)$ and some of them have important applications.
For finite-dimensional case, the complex-valued unitary matrices $\U_d(\mathbb{C})$ is just the usual unitary group acting linearly on $\mathbb{C}^d$.
For inifinite-dimensional case, Kuiper \cite{MR179792} proved that the space of all unitary operators on an inifinite-dimensional Hilbert space is contractible.
This result is basic in the Atiyah--Singer index theory.
This kind of contractibility result has been extended to $\U_d(A)$ of some other algebras $A$ while $A$ is all the bounded operators on a infinite dimensional Hilbert space in the original result. 
Of course it is not always the case for $\U_d(A)$ of other infinite-dimensional $C^\ast$-algebras $A$.
In general, it is hard to determine the homotopy type of $\U_d(A)$.

Let us use the notation
\[
\GL_\infty(A)=\lim_{d\to\infty}\GL_d(A)
\quad\text{and}\quad
\U_\infty(A)=\lim_{d\to\infty}\U_d(A).
\]
It is well-known that $\U_\infty(A)$ has the same homotopy type as $\U_1(A\otimes\mathcal{K})$ where $\mathcal{K}$ is the space of compact operators.
The $K$-theory $K_i(A)$ ($i=0,1$) is a basic homotopy invariant of $A$, which is characterized as
\[
K_0(A)=\pi_1(\U_\infty(A))
\quad\text{and}\quad
K_1(A)=\pi_0(\U_\infty(A)).
\]
Since $\U_d(A)$ is not necessarily homotopy equivalent to $\U_\infty(A)$, $K_i(A)$ is not a so strong invariant in general.
But sometimes the natural map $\U_d(A)\to\U_\infty(A)$, which we will call the \textit{stabilizing map}, becomes a homotopy equivalence.
Study on such stability can be traced back to the work of Bass \cite{MR174604}.
There have been a various works on this kind of stability.
Rieffel introduced the \textit{topological stable rank} in \cite{MR693043} and applied it to show the stability of the non-commutative torus in \cite{MR887221}, which is a key tool in the present work.
It is difficult in general to determine how stable a given $C^\ast$-algebra is.

In the present paper, we study the stability of the uniform Roe algebra $C^\ast_u(|\mathbb{Z}^n|)$ on $\mathbb{Z}^n$ and investigate its homotopy type.
The \textit{uniform Roe algebra} $C^\ast_u(X)$ of a metric space $X$ is introduced by Roe in \cite{MR918459} to establish an index theory on open manifolds, where the index lives in the $K$-theory $K_\ast(C^\ast_u(X))$.
The algebra $C^\ast_u(X)$ itself is also important since it encodes a kind of ``large scale geometry'' of $X$.
Studying the homotopy type of $\U_d(C^\ast_u(X))$ will provide more insights from a homotopy theoretic viewpoint, which cannot be obtained only from its $K$-theory.
But there are only a few works on the homotopy type of $\U_d(C^\ast_u(X))$ yet.
For example, Manuilov and Troitsky \cite{MR4155292} studied some condition for $\U_d(C^\ast_u(X))$ being contractible.
In the present work, we observe the other extreme, that is, $\U_d(C^\ast_u(|\mathbb{Z}^n|))$ has a highly nontrivial homotopy type.

We give some comment on the relation with our previous work \cite{KKT} on the space $\mathcal{U}$ of finite propagation unitary operators on $\mathbb{Z}$.
Note that $\U_1(C^\ast_u(|\mathbb{Z}|))$ can be viewed as a kind of completion of $\mathcal{U}$.
We determined the homotopy type of $\mathcal{U}$ there.
But it is not clear whether $\mathcal{U}$ has the same homotopy type as $\U_1(C^\ast_u(|\mathbb{Z}|))$.
Actually, they turn out to have the same homotopy type (Theorem \ref{thm_decomp}).
Also, the method there does not seem to be extended to $\mathbb{Z}^n$ when $n\ge2$.
We employ rather operator algebraic technique in the present paper.
Our method here reduces the problem to determine the homotopy type of $\U_d(C^\ast_u(|\mathbb{Z}^n|))$ to the one to show the surjectivity of the homomorphism on $K$-theory $K_\ast(C^\ast_u(|\mathbb{Z}^n|))\to K_\ast(C^\ast(|\mathbb{Z}^n|))$ induced from the inclusion (Proposition \ref{prp_section_surj}), where $C^\ast(|\mathbb{Z}^n|)$ denotes the Roe algebra of $\mathbb{Z}^n$.

For stability, we show the following theorem in Section \ref{section_stability}.
\begin{thm}
\label{thm_stability}
For any integer $n\ge1$, the stabilizing maps
\begin{align*}
&\GL_1(C^\ast_u(|\mathbb{Z}^n|))\to\GL_\infty(C^\ast_u(|\mathbb{Z}^n|)),
&&\U_1(C^\ast_u(|\mathbb{Z}^n|))\to\U_\infty(C^\ast_u(|\mathbb{Z}^n|)),\\
&\GL_1(C^\ast(|\mathbb{Z}^n|))\to\GL_\infty(C^\ast(|\mathbb{Z}^n|)),
&&\U_1(C^\ast(|\mathbb{Z}^n|))\to\U_\infty(C^\ast(|\mathbb{Z}^n|))
\end{align*}
between the spaces of invertible and unitary elements are homotopy equivalences.
This implies that these maps induce the following isomorphisms on homotopy groups for all $i\ge0$:
\begin{align*}
&\pi_i(\GL_1(C^\ast_u(|\mathbb{Z}^n|)))
\cong
\pi_i(\U_1(C^\ast_u(|\mathbb{Z}^n|)))
\cong
\begin{cases}
K_1(C^\ast_u(|\mathbb{Z}^n|)) & \text{$i$ is even,} \\
K_0(C^\ast_u(|\mathbb{Z}^n|)) & \text{$i$ is odd,}
\end{cases}
\\
&\pi_i(\GL_1(C^\ast(|\mathbb{Z}^n|)))
\cong
\pi_i(\U_1(C^\ast(|\mathbb{Z}^n|)))
\cong
\begin{cases}
K_1(C^\ast(|\mathbb{Z}^n|)) & \text{$i$ is even,} \\
K_0(C^\ast(|\mathbb{Z}^n|)) & \text{$i$ is odd.}
\end{cases}
\end{align*}
\end{thm}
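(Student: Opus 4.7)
The strategy rests on Rieffel's theory of topological stable rank, combined with the separate observation that the Roe algebra is stable. For $C^\ast(|\mathbb{Z}^n|)$, the conclusion is essentially formal: by construction the Roe algebra consists of operators on $\ell^2(\mathbb{Z}^n)\otimes H$ with $H$ an auxiliary separable infinite-dimensional Hilbert space, giving the stability isomorphism $C^\ast(|\mathbb{Z}^n|)\otimes\mathcal{K}\cong C^\ast(|\mathbb{Z}^n|)$. From this, each stabilization map $\U_d\to\U_{d+1}$ is already a homotopy equivalence, via the standard identification $\U_\infty(A)\simeq\U_1(A\otimes\mathcal{K})$, which collapses to $\U_1(A)$ when $A$ is stable. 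The same argument applies to $\GL_d$, handling the Roe algebra lines of the theorem.

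For the uniform Roe algebra $C^\ast_u(|\mathbb{Z}^n|)$ the situation is not formal, since this algebra is not stable. The core task is to establish $\tsr(C^\ast_u(|\mathbb{Z}^n|))=1$. Once this is in hand, Rieffel's stability theorem yields that $\GL_1\to\GL_\infty$ induces isomorphisms on all homotopy groups. Since each $\GL_d(A)$ is an open subset of the Banach space $M_d(A^\sim)$, hence an ANR with the homotopy type of a CW complex, the weak equivalence upgrades to an honest homotopy equivalence by Whitehead's theorem. The unitary version follows via the standard polar-type deformation retraction $\GL_d(A)\simeq\U_d(A)$, compatible with stabilization. The identification of $\pi_i$ with $K$-theory is then immediate from $K_i(A)=\pi_{i-1}(\U_\infty(A))$ and Bott periodicity.

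The remaining and principal step is to verify $\tsr(C^\ast_u(|\mathbb{Z}^n|))=1$. Using the identification $C^\ast_u(|\mathbb{Z}^n|)\cong\ell^\infty(\mathbb{Z}^n)\rtimes_r\mathbb{Z}^n$ as a reduced crossed product, one proceeds by induction on $n$, peeling off one $\mathbb{Z}$-factor at each step. The base case $\tsr(\ell^\infty(\mathbb{Z}^n))=\tsr(C(\beta\mathbb{Z}^n))=1$ is immediate because $\beta\mathbb{Z}^n$ is totally disconnected. For the inductive step one adapts Rieffel's argument in \cite{MR887221} for the non-commutative tori, which shows that iterated $\mathbb{Z}$-crossed products of suitable algebras preserve $\tsr=1$; the same approximation scheme by finite-propagation elements should carry over to the present setting.

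\textbf{Main obstacle.} The technical heart is the tsr computation for $\ell^\infty(\mathbb{Z}^n)\rtimes_r\mathbb{Z}^n$. While the formal pattern echoes Rieffel's treatment of non-commutative tori, $\ell^\infty$ is vastly larger than $C(\mathbb{T})$, and the translation action of $\mathbb{Z}$ on $\beta\mathbb{Z}$ is far from minimal or free. Producing a controlled approximation of arbitrary elements of the crossed product by invertibles will require delicate use of the bounded geometry of $\mathbb{Z}^n$ together with the finite-propagation structure that is special to the uniform Roe algebra.
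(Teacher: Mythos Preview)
Your treatment of the Roe algebra $C^\ast(|\mathbb{Z}^n|)$ via stability is correct and in fact more direct than the paper's route (the paper handles both algebras uniformly through tsr-bounded divisibility rather than invoking $C^\ast(|\mathbb{Z}^n|)\otimes\mathcal{K}\cong C^\ast(|\mathbb{Z}^n|)$).

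The genuine gap is in the uniform Roe algebra case. Your plan hinges on proving $\tsr(C^\ast_u(|\mathbb{Z}^n|))=1$, but the method you sketch does not work and the claim itself is not established. The assertion that Rieffel's non-commutative torus argument ``shows that iterated $\mathbb{Z}$-crossed products of suitable algebras preserve $\tsr=1$'' is a misreading: Rieffel's $\tsr=1$ result for the irrational rotation algebras depends essentially on the irrationality of the rotation. The general bound from \cite{MR693043} is only $\tsr(A\rtimes\mathbb{Z})\le\tsr(A)+1$, and it is sharp---already $C(\mathbb{T}^2)=C(\mathbb{T})\rtimes\mathbb{Z}$ has $\tsr=2$. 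The translation action of $\mathbb{Z}^n$ on $\beta\mathbb{Z}^n$ is free but far from minimal, so the known dynamical criteria for $\tsr=1$ of crossed products (Putnam-type results for Cantor minimal systems, or the more recent work via almost finiteness) do not apply. Whether $\tsr(C^\ast_u(|\mathbb{Z}^n|))=1$ holds appears to be open; the paper itself only deduces $\tsr\le 2$ a posteriori.

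The paper sidesteps the issue entirely. Instead of computing $\tsr$ exactly, it exploits the self-similarity of $\mathbb{Z}^n$: rescaling the first coordinate by $d$ yields an explicit isomorphism $C^\ast_u(|\mathbb{Z}^n|)\cong M_d(C^\ast_u(|\mathbb{Z}^n|))$ for every $d\ge1$ (Lemma~\ref{lem_divisibilityRoe}). Combined with the crude estimate $\tsr(C^\ast_u(|\mathbb{Z}^n|))\le n+1$ obtained by iterating $\tsr(A\rtimes\mathbb{Z})\le\tsr(A)+1$ starting from $\tsr(\ell^\infty(\mathbb{Z}^n,\mathbb{C}))=1$ (Lemma~\ref{lem_tsr_Roe}), this makes $C^\ast_u(|\mathbb{Z}^n|)$ tsr-boundedly divisible, and Rieffel's Theorem~\ref{thm_rieffel_stability} applies directly. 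The matrix-divisibility observation is the key idea you are missing; it replaces the hard (and possibly false) $\tsr=1$ computation with an elementary block-matrix argument.
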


Let $K(V,i)$ denote the Eilenberg--MacLane space of type $(V,i)$ and $B\U_\infty(\mathbb{C})$ denote the classifying space of the unitary group $\U_\infty(\mathbb{C})$.
Also, for based spaces $X_i$ ($i=1,2,\ldots$), define
\[
\prod_{i\ge1}^\circ X_i=\lim_{k\to\infty}(X_1\times X_2\times\cdots\times X_k).
\]
For the homotopy type of $\U_1(C^\ast_u(|\mathbb{Z}^n|))$, we show the following results when $n=1,2$.
\begin{thm}
\label{thm_decomp}
There exist homotopy equivalences of infinite loop spaces
\[
\GL_1(C^\ast_u(|\mathbb{Z}|))
\simeq
\U_1(C^\ast_u(|\mathbb{Z}|))
\simeq
\mathbb{Z}\times B\U_\infty(\mathbb{C})
	\times\prod_{i\ge1}^\circ
	K(\ell^\infty(\mathbb{Z},\mathbb{Z})_{S},2i-1).
\]
\end{thm}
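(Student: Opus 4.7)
By Theorem \ref{thm_stability} I may replace $\U_1$ by $\U_\infty$ throughout; both sides then become $0$-th spaces of operator $K$-theory spectra, and the desired homotopy equivalence should be read in the infinite loop space category. The overall strategy is to invoke Proposition \ref{prp_section_surj}: granting surjectivity of the $K$-theory map $\iota_\ast\colon K_\ast(C^\ast_u(|\mathbb{Z}|))\to K_\ast(C^\ast(|\mathbb{Z}|))$ induced by the inclusion, the proposition should furnish a splitting of infinite loop spaces
\[
\U_\infty(C^\ast_u(|\mathbb{Z}|))\simeq\U_\infty(C^\ast(|\mathbb{Z}|))\times F
\]
with $\pi_\ast F=\ker\iota_\ast$ in the appropriate degrees. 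The theorem thus reduces to (i) the $K$-theoretic computation of $\iota_\ast$ and its kernel and (ii) the identification of the homotopy type of $F$.

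For (i), I would realize $C^\ast_u(|\mathbb{Z}|)$ as the reduced crossed product $\ell^\infty(\mathbb{Z})\rtimes\mathbb{Z}$ under the shift and apply the Pimsner--Voiculescu six-term exact sequence, running the parallel computation for $C^\ast(|\mathbb{Z}|)$ in order to extract $\iota_\ast$ by naturality. Combined with the already known $K_0(C^\ast(|\mathbb{Z}|))=0$ and $K_1(C^\ast(|\mathbb{Z}|))=\mathbb{Z}$, the target output is
\[
K_0(C^\ast_u(|\mathbb{Z}|))\cong\ell^\infty(\mathbb{Z},\mathbb{Z})_S,\qquad K_1(C^\ast_u(|\mathbb{Z}|))\cong\mathbb{Z},
\]
with $\iota_\ast$ zero on $K_0$ and an isomorphism on $K_1$. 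This simultaneously supplies the surjectivity hypothesis of Proposition \ref{prp_section_surj} and identifies $\ker\iota_\ast$ with $\ell^\infty(\mathbb{Z},\mathbb{Z})_S$ concentrated in degree $0$.

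For (ii), the infinite loop space $F$ has $\pi_{2i-1}F\cong\ell^\infty(\mathbb{Z},\mathbb{Z})_S$ and $\pi_{2i}F=0$. Provided $\ell^\infty(\mathbb{Z},\mathbb{Z})_S$ is a $\mathbb{Q}$-vector space (which should emerge from its description via the Pimsner--Voiculescu coinvariants), every stage of the Postnikov tower of $F$ is a rational Eilenberg--MacLane spectrum with coefficients in a rational group, whose even-degree cohomology vanishes, so all $k$-invariants are zero and $F$ splits as
\[
F\simeq\prod_{i\ge1}^\circ K(\ell^\infty(\mathbb{Z},\mathbb{Z})_S,2i-1)
\]
in the infinite loop space category. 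The hardest step will therefore be (i): identifying $K_0$ of the crossed product $\ell^\infty(\mathbb{Z})\rtimes\mathbb{Z}$ with the concrete group $\ell^\infty(\mathbb{Z},\mathbb{Z})_S$ and verifying its rationality. Once this is done, Proposition \ref{prp_section_surj} together with the vanishing of the Postnikov $k$-invariants of $F$ formally combine to yield the stated infinite loop space decomposition.
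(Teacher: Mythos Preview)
Your proposal is correct and coincides with the \emph{second} proof the paper sketches in one line at the very end of Section~\ref{section_generalization}: compute $K_\ast(C^\ast_u(|\mathbb{Z}|))$ via Pimsner--Voiculescu, feed the surjectivity of $\iota_\ast$ into Proposition~\ref{prp_section_surj}, identify $\U_\infty(C^\ast(|\mathbb{Z}|))\simeq\mathbb{Z}\times B\U_\infty(\mathbb{C})$ from its $K$-theory, and split the rational fiber as a product of Eilenberg--MacLane spaces.

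The paper's \emph{primary} proof in Section~\ref{section_htpytype} takes a different route: instead of the Roe algebra $C^\ast(|\mathbb{Z}|)$ it uses the Segal--Wilson algebra $B^{\mathrm{SW}}$ as the target. The inclusion $C^\ast_u(|\mathbb{Z}|)\subset B^{\mathrm{SW}}$ is shown directly to induce an isomorphism on $K_1$ because both groups are visibly generated by the shift $S$ (Lemma~\ref{lem_CuZ_BSW_even}), and a homotopy section is built by hand in Proposition~\ref{prp_section_CuZtoBSW} via Lemma~\ref{lem_Uinfty_to_A} and Bott periodicity, rather than by invoking the general Proposition~\ref{prp_section_surj}. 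The advantage of the $B^{\mathrm{SW}}$ route is that $\U_1(B^{\mathrm{SW}})\simeq\mathbb{Z}\times B\U_\infty(\mathbb{C})$ is already classical (Segal--Wilson), so no separate identification of the target homotopy type is needed; the advantage of your route is that it uses only the Roe algebra machinery already set up for general $n$, so it fits uniformly into the framework of Section~\ref{section_generalization}. For the fiber, the paper cites \cite[Lemma~5.4]{KKT} to produce the splitting maps explicitly, whereas your $k$-invariant argument (rational homotopy groups force a generalized Eilenberg--MacLane spectrum) is an acceptable stable-homotopy substitute.
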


\begin{thm}
\label{thm_decomp_Z2}
There exist homotopy equivalences of infinite loop spaces
\[
\GL_1(C^\ast_u(|\mathbb{Z}^2|))
\simeq
\U_1(C^\ast_u(|\mathbb{Z}^2|))
\simeq
V_1\times\U_\infty(\mathbb{C})\times
\prod_{i\ge1}^\circ(K(V_0,2i-1)\times K(V_1,2i)),
\]
where $V_0,V_1$ are the rational vector spaces given by
\[
V_0
=
\ker[K_0(C^\ast_u(|\mathbb{Z}^2|))\to K_0(C^\ast(|\mathbb{Z}^2|))],
\quad
V_1
=
K_1(C^\ast_u(|\mathbb{Z}^2|))
\]
and the product factor $V_1$ is a discrete space.
\end{thm}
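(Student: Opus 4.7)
The approach is to apply Proposition~\ref{prp_section_surj}, which reduces the computation of the homotopy type of $\U_1(C^\ast_u(|\mathbb{Z}^2|))$ to verifying surjectivity of the $K$-theory map $K_\ast(C^\ast_u(|\mathbb{Z}^2|))\to K_\ast(C^\ast(|\mathbb{Z}^2|))$ and then analyzing its kernel. By Theorem~\ref{thm_stability}, $\U_1(C^\ast_u(|\mathbb{Z}^2|))\simeq\U_\infty(C^\ast_u(|\mathbb{Z}^2|))$ as infinite loop spaces with Bott-periodic homotopy groups equal to $K_\ast(C^\ast_u(|\mathbb{Z}^2|))$. A standard coarse $K$-theory computation yields $K_0(C^\ast(|\mathbb{Z}^2|))=\mathbb{Z}$ and $K_1(C^\ast(|\mathbb{Z}^2|))=0$, so $\U_\infty(C^\ast(|\mathbb{Z}^2|))\simeq\U_\infty(\mathbb{C})$. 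The $K_1$-map is trivially surjective, and the Bott-type generator of $K_0(C^\ast(|\mathbb{Z}^2|))$ lifts to $C^\ast_u(|\mathbb{Z}^2|)$; hence by Proposition~\ref{prp_section_surj} one obtains an infinite loop splitting
\[
\U_\infty(C^\ast_u(|\mathbb{Z}^2|))\simeq F\times\U_\infty(\mathbb{C}),
\]
where $F$ is the homotopy fiber of the inclusion-induced map. Its homotopy groups are $\pi_0(F)=V_1$, $\pi_{2i-1}(F)=V_0$, and $\pi_{2i}(F)=V_1$ for $i\ge1$.

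The principal remaining task, and the main obstacle, is to prove that $V_0$ and $V_1$ are $\mathbb{Q}$-vector spaces. My plan is to exploit the product structure $|\mathbb{Z}^2|=|\mathbb{Z}|\times|\mathbb{Z}|$ and reduce to the $n=1$ case, where the analogous kernel $\ell^\infty(\mathbb{Z},\mathbb{Z})_S$ appearing in Theorem~\ref{thm_decomp} is already a $\mathbb{Q}$-vector space. A Pimsner--Voiculescu sequence for the $\mathbb{Z}$-action from one factor, combined with the $n=1$ computation, should express $V_0$ and $V_1$ as subquotients of iterated $\ell^\infty$-type groups on which multiplication by any nonzero integer remains a bijection, i.e.\ as divisible torsion-free abelian groups. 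I expect this filtration-and-K\"unneth analysis to be the most technical part of the argument, both because one must track the $S$-localization carefully and because the interaction of the two coordinate directions in $|\mathbb{Z}^2|$ introduces genuine extensions that need to be resolved.

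Once this rationality is established, $F$ is a rational infinite loop space. Every rational connective spectrum is equivalent to a product of rational Eilenberg--MacLane spectra, since the rational sphere spectrum is $H\mathbb{Q}$ and hence all $k$-invariants vanish rationally. Splitting off the discrete $\pi_0=V_1$ factor (using that the H-space $F$ retracts onto $\pi_0(F)$) and collecting the remaining Postnikov layers gives
\[
F\simeq V_1\times\prod_{i\ge1}^\circ\bigl(K(V_0,2i-1)\times K(V_1,2i)\bigr).
\]
Combining this with the splitting $\U_\infty(C^\ast_u(|\mathbb{Z}^2|))\simeq F\times\U_\infty(\mathbb{C})$ yields the desired decomposition for $\U_1(C^\ast_u(|\mathbb{Z}^2|))$, and the $\GL_1$ case follows from the standard polar-decomposition equivalence $\GL_1\simeq\U_1$.
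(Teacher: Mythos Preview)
Your proposal follows the same overall strategy as the paper: establish surjectivity of $K_\ast(C^\ast_u(|\mathbb{Z}^2|))\to K_\ast(C^\ast(|\mathbb{Z}^2|))$ to invoke Proposition~\ref{prp_section_surj}, split off $\U_\infty(\mathbb{C})$, and then decompose the rational fiber. The paper carries out both the surjectivity and the rationality of $V_0,V_1$ via exactly the iterated Pimsner--Voiculescu computation you sketch (Lemmas~\ref{lem_surj_KCuZ2} and~\ref{lem_KCuZ2}); note that your assertion that ``the Bott-type generator lifts'' is not obvious a priori and is in fact what that same PV comparison establishes, so you should not treat it as a separate, easier step. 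The one genuine difference is in the final decomposition of the fiber: you appeal to the general splitting of rational spectra into Eilenberg--MacLane pieces, whereas the paper builds explicit maps $\prod^\circ K(V_0,2i-1)\to F_2$ and $\prod^\circ K(V_1,2i-1)\to\Omega F_2$ in the style of \cite[Lemma~5.4]{KKT} and then uses the fiberwise Bott equivalence $\tilde\beta\colon F_2\xrightarrow{\simeq}\Omega^2 F_2$ to assemble them. Both routes are valid; yours is more conceptual, while the paper's stays at the space level and makes the infinite loop compatibility explicit without invoking spectrum-level rational homotopy theory.
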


More detailed descriptions of the vector spaces $V_0$ and $V_1$ appear in the proof of Lemma \ref{lem_KCuZ2}.

We will see the existence of a homotopy section of the inclusion $\U_1(C^\ast_u(|\mathbb{Z}|))\to\U_1(B^{\mathrm{SW}})$ in Section \ref{section_htpytype}, where $\U_1(B^{\mathrm{SW}})$ is the Segal--Wilson restricted unitary group \cite{MR783348} having the homotopy type of $\mathbb{Z}\times B\U_\infty(\mathbb{C})$.
This implies Theorem \ref{thm_decomp}.
We also show in Section \ref{section_generalization} that, for any integer $n\ge1$, the inclusion $\U_1(C^\ast_u(|\mathbb{Z}^n|))\to\U_1(C^\ast(|\mathbb{Z}^n|))$ admits a homotopy section if and only if the homomorphism $K_\ast(C^\ast_u(|\mathbb{Z}^n|))\to K_\ast(C^\ast(|\mathbb{Z}^n|))$ is surjective (Proposition \ref{prp_section_surj}).
Since we can see it is surjective when $n=1,2$, Theorems \ref{thm_decomp} again and \ref{thm_decomp_Z2} follows.
If one could show the surjectivity for $n\ge3$, then a similar homotopy decomposition will immediately follow.

This paper is organized as follows.
We fix our notation in Section \ref{section_finprop}.
In Section \ref{section_stability}, we recall Rieffel's results on stability and show Theorem \ref{thm_stability}.
In Section \ref{section_Bott}, we recall the Bott periodicity realized as a $\ast$-homomorphism.
In Section \ref{section_SW}, we recall the Segal--Wilson restricted unitary group and show its stability.
In Section \ref{section_htpytype}, we show Theorem \ref{thm_decomp} using the Segal--Wilson restricted unitary group.
In Section \ref{section_generalization}, we discuss the homotopy type of $\U_d(C^\ast_u(|\mathbb{Z}^n|))$ for general $n\ge1$ and show Theorems \ref{thm_decomp} again and \ref{thm_decomp_Z2}.

\section{Notation}
\label{section_finprop}

The $C^\ast$-algebra of bounded operators on a Hilbert space $V$ is denoted by $\mathcal{B}(V)$ and the subalgebra of compact operators by $\mathcal{K}(V)$.
We write the operator norm of $T\in\mathcal{B}(V)$ as $\|T\|$.
The Hilbert space of square summable sequences indexed by a discrete group $\Gamma$ will be written as
\[
\ell^2(\Gamma)
=
\{(v_g)_g\mid
\sum_{g\in\Gamma}|v_g|^2<\infty\}.
\]
We also consider the tensor product Hilbert space $\ell^2(\Gamma)\otimes\mathcal{H}$ with an infinite dimensional separable Hilbert space $\mathcal{H}$.

A bounded operator $T\in\mathcal{B}(\ell^2(\Gamma))$ can be expressed in the matrix form as
\[
T=(T_{g,h})_{g,h},
\quad
T_{g,h}\in\mathbb{C}.
\]
For $T\in\mathcal{B}(\ell^2(\Gamma)\otimes\mathcal{H})$, we also have a similar expression $T=(T_{g,h})_{g,h}$ with $T_{g,h}\in\mathcal{B}(\mathcal{H})$.

\begin{dfn}
Let $\Gamma$ be a finitely generated group and $d$ denote the word metric with respect to some finite set of generators.
We say that a bounded operator $T\in\B(\ell^2(\Gamma))$ has \textit{finite propagation} if
\[
\prop(T)= \sup\{d(g,h)\mid T_{g,h}\ne0\}
\]
is finite.
We define finite propagation for $T=(T_{g,h})_{g,h}\in\mathcal{B}(\ell^2(\Gamma)\otimes\mathcal{H})$ similarly.
\end{dfn}

\begin{ex}
\label{ex_shift}
The \textit{shift} $S_x\in\mathcal{B}(\ell^2(\Gamma))$ by $x\in\Gamma$ is defined by
\[
S_x=((S_x)_{g,h})_{g,h},
\quad
S_{g,h}=
\begin{cases}
1 & g^{-1}h=x, \\
0 & \text{otherwise.}
\end{cases}
\]
The operator $S_x$ is a unitary operator with $\prop(S_x)=d(x,1)$.
\end{ex}

It is easy to see that the definition of having finite propagation is independent of the choice of generators while the value of $\prop(T)$ depends on the word metric.
Since we have
\[
\prop(ST)\le\prop(S)+\prop(T),
\quad
\prop(T^\ast)=\prop(T),
\quad
\prop(1)=0
\]
for any finite propagation operators $S,T\in\mathcal{B}(\ell^2(\Gamma))$, the subset of finite propagation operators becomes a unital $\ast$-subalgebra of $\mathcal{B}(\ell^2(\Gamma))$.
Similar properties hold for finite propagation operators $S,T\in\mathcal{B}(\ell^2(\Gamma)\otimes\mathcal{H})$ such that the components $T_{g,h}$ and $S_{g,h}$ are compact operators.

\begin{dfn}
The \textit{uniform Roe algebra} $C^\ast_u(|\Gamma|)$ of $\Gamma$ is the norm closure of the algebra of finite propagation operators in $\mathcal{B}(\ell^2(\Gamma))$.
\end{dfn}

\begin{dfn}
The \textit{Roe algebra} $C^\ast(|\Gamma|)$ of $\Gamma$ is the norm closure of the algebra of finite propagation operators $T\in\mathcal{B}(\ell^2(\Gamma)\otimes\mathcal{H})$ such that each component $T_{g,h}$ is a compact operator.
\end{dfn}

\begin{rem}
We follow the usual notation $C^\ast(|\Gamma|)$ for the Roe algebra of $\Gamma$ to distinguish it from the group $C^\ast$-algebra of $\Gamma$ though we do not consider the latter here.
\end{rem}

We will consider the uniform Roe algebra $C^\ast_u(|\Gamma|)$ is a subalgebra of the Roe algebra $C^\ast(|\Gamma|)$ with respect to some inclusion $\mathbb{C}\subset\mathcal{H}$.

We use the symbol $\ell^\infty(\Gamma,\mathbb{C})$ to express the Banach algebra of $\mathbb{C}$-valued bounded sequences indexed by $\Gamma$ rather than the simpler symbol $\ell^\infty(\Gamma)$ since we also consider the abelian group of $\mathbb{Z}$-valued bounded sequences $\ell^\infty(\Gamma,\mathbb{Z})$.

The group $\Gamma$ acts on the algebras $\ell^\infty(\Gamma,\mathbb{C})$ and $\ell^\infty(\Gamma,\mathcal{K}(\mathcal{H}))$ by right translation.
The action by $x\in\Gamma$ is compatible with the conjugation by $S_x$ through the diagonal inclusion $\ell^\infty(\Gamma,\mathbb{C})\to C^\ast_u(|\Gamma|)$ or $\ell^\infty(\Gamma,\mathcal{K}(\mathcal{H}))\to C^\ast(|\Gamma|)$ given by
\[
(t_g)_g\mapsto(T_{g,h})_{g,h},
\quad
T_{g,h}=
\begin{cases}
t_g & g=h,\\
0 & \text{otherwise.}
\end{cases}
\]
Moreover, this inclusion extends to the well-known isomorphisms
\[
\ell^\infty(\Gamma,\mathbb{C})\rtimes\Gamma\cong C^\ast_u(|\Gamma|),
\quad
\ell^\infty(\Gamma,\mathcal{K}(\mathcal{H}))\rtimes\Gamma\cong C^\ast(|\Gamma|)
\]
from the reduced crossed products of $C^\ast$-algebras.
For example, see \cite[Theorem 4.28]{MR2007488}.

The $d\times d$-matrix algebra $M_d(A)$ of a $C^\ast$-algebra $A$ is again a $C^\ast$-algebra.
The spaces of invertible elements and unitary elements in $M_d(A)$ will be denoted as $\GL_d(A)$ and $\U_d(A)$.
The \textit{stabilizing maps} are given as
\[
\GL_1(A)\to\GL_\infty(A)=\lim_{d\to\infty}\GL_d(A),
\quad
\U_1(A)\to\U_\infty(A)=\lim_{d\to\infty}\U_d(A),
\]
where the inductive limits are taken along the inclusions $\GL_d(A)\subset\GL_{d+1}(A)$ and $\U_d(A)\subset\U_{d+1}(A)$.
The inductive limit spaces $\GL_\infty(A)$ and $\U_\infty(A)$ are well-known to be homotopy equivalent to the spaces $\GL_1(A\otimes\mathcal{K}(\mathcal{H}))$ and $\U_1(A\otimes\mathcal{K}(\mathcal{H}))$.

\section{Stability}
\label{section_stability}

The aim of this section is to prove Theorem \ref{thm_stability}.
Once the assumption of the following result by Rieffel \cite{MR887221} is verified, the theorem will immediately follow.
\begin{thm}[Rieffel]
\label{thm_rieffel_stability}
Let $A$ be a unital $C^\ast$-algebra.
If $A$ is tsr-boundedly divisible, then the stabilizing maps
\[
\GL_1(A)\to\GL_\infty(A)
\quad
\text{and}
\quad
\U_1(A)\to\U_\infty(A)
\]
are homotopy equivalences.
\end{thm}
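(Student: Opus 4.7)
The plan is to follow Rieffel's strategy in \cite{MR887221}, breaking the problem into three stages: (a) reducing the unitary case to the invertible case, (b) analyzing each elementary stabilization as a fibration, and (c) using the bounded-divisibility hypothesis to make the analysis work uniformly starting from $d=1$.

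First I would reduce the unitary statement to the invertible one via polar decomposition. The map $a \mapsto a(a^\ast a)^{-1/2}$ defines a strong deformation retraction $\GL_d(A) \to \U_d(A)$ that is natural in $d$ and in particular commutes with the block-diagonal stabilization $M_d(A) \hookrightarrow M_{d+1}(A)$. A diagram chase then shows that $\U_1(A) \to \U_\infty(A)$ is a homotopy equivalence if and only if $\GL_1(A) \to \GL_\infty(A)$ is, so it suffices to treat the $\GL$-case.

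Next I would analyze each elementary step $\GL_d(A) \to \GL_{d+1}(A)$ via Rieffel's principal fibration
\[
\GL_d(A) \longrightarrow \GL_{d+1}(A) \longrightarrow \mathrm{Lg}_{d+1}(A),
\]
where $\mathrm{Lg}_{d+1}(A)$ is the space of $(d+1)$-tuples in $A$ generating $A$ as a right module, equivalently the space of last columns of an invertible $(d+1)\times(d+1)$ matrix. This reduces the problem to estimating the connectivity of $\mathrm{Lg}_{d+1}(A)$; Rieffel's connectivity theorem, a noncommutative cousin of Bass's stability, provides bounds in terms of $\tsr(M_d(A))$. In particular, $\tsr(M_d(A))=1$ forces $\mathrm{Lg}_{d+1}(A)$ to be weakly contractible, which upgrades the stabilization $\GL_d(A)\to\GL_{d+1}(A)$ to a weak equivalence.

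The crucial role of the bounded-divisibility hypothesis is to make this work for every $d\ge 1$, not merely asymptotically. The definition of \emph{tsr-boundedly divisible} is tailored precisely so that each $M_d(A)$ has topological stable rank one, uniformly in $d$: the divisibility supplies enough approximately orthogonal ``matrix block'' coordinates inside $A$ to approximate an arbitrary element of $M_d(A)$ by an invertible one. With $\tsr(M_d(A))=1$ for all $d\ge 1$, the connectivity estimate applies at every level, so every inclusion $\GL_d(A)\to\GL_{d+1}(A)$ is a weak equivalence. Passing to the colimit yields that $\GL_1(A)\to\GL_\infty(A)$ is a weak equivalence, and this upgrades to a genuine homotopy equivalence since both spaces have the homotopy type of CW complexes (being open subsets of Banach spaces). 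The main obstacle is the middle step: correctly formulating the nonabelian Bass-type connectivity estimate for $\mathrm{Lg}_{d+1}(A)$ and verifying that tsr-bounded divisibility really does yield the uniform bound $\tsr(M_d(A))=1$; both points constitute the technical core of \cite{MR887221}, whereas the polar-decomposition reduction and the colimit step are essentially formal.
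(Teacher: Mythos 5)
This theorem is not proved in the paper at all: it is quoted from Rieffel's Theorem~4.13 in \cite{MR887221}, with only the remark that Rieffel's argument in fact yields a homotopy equivalence rather than merely isomorphisms of homotopy groups. So the relevant comparison is with Rieffel's actual argument, and there your proposal has a genuine gap in the middle step. Your outer layers are fine --- the polar-decomposition reduction from $\U$ to $\GL$, and the final upgrade from weak equivalence to homotopy equivalence via the CW homotopy type of open subsets of Banach spaces --- but the mechanism you propose for exploiting tsr-bounded divisibility is wrong on two counts. First, tsr-bounded divisibility does \emph{not} imply $\tsr(M_d(A))=1$: by Rieffel's formula $\tsr(M_d(A))=\lceil(\tsr(A)-1)/d\rceil+1$, one has $\tsr(M_d(A))=1$ for some $d$ if and only if $\tsr(A)=1$, whereas bounded divisibility only forces $\tsr(A)\le 2$ (this is \cite[Proposition~4.6]{MR887221}, cited in the remark after Lemma~\ref{lem_tsr_Roe}); indeed the algebras $C^\ast_u(|\mathbb{Z}^n|)$ treated in this paper have stable rank $2$, so your claimed uniform bound fails for the very examples at hand. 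Second, even when $\tsr=1$, the space $\mathrm{Lg}_{d+1}(A)$ is not weakly contractible for fixed $d$: the connectivity estimate gives only $(d+1-\tsr(A)-1)$-connectivity, a finite bound growing with $d$, so each elementary inclusion $\GL_d(A)\to\GL_{d+1}(A)$ is controlled only on $\pi_i$ for $i$ up to roughly $d-\tsr(A)$.

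The hypothesis is actually used quite differently. Fix $i\ge 0$ and choose, using tsr-bounded divisibility, an isomorphism $A\cong M_d(B)$ with $\tsr(B)\le K$ and $d$ large compared with $K+i$. This identifies $\GL_1(A)$ with $\GL_d(B)$ and $\GL_\infty(A)$ with $\GL_\infty(B)$ compatibly with stabilization, and Rieffel's finite stability range (the $\mathrm{Lg}$-fibration argument you describe, applied to $B$ rather than to $M_d(A)$) shows that $\pi_i(\GL_d(B))\to\pi_i(\GL_\infty(B))$ is an isomorphism once $d$ exceeds roughly $\tsr(B)+i$. Since $d$ can be taken arbitrarily large for each fixed $i$ while $\tsr(B)$ stays bounded by $K$, the map $\GL_1(A)\to\GL_\infty(A)$ is a weak equivalence, and then a homotopy equivalence as you say. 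In short: the divisibility does not kill the stable rank or contract $\mathrm{Lg}_{d+1}$; it lets you re-index $\GL_1(A)$ as a $\GL_d$ of a fixed-stable-rank algebra so that every homotopy group eventually lands inside the (finite, $i$-dependent) stability range.
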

\begin{rem}
The original statement of Theorem 4.13 in \cite{MR887221} is involved only with homotopy groups.
But what is actually proved there is slightly stronger as above.
\end{rem}

For the definitions of the \textit{topological stable rank} $\tsr(A)\in\mathbb{Z}_{\ge1}$, see \cite{MR693043}.
A $C^\ast$-algebra $A$ is said to be \textit{tsr-boundedly divisible} \cite{MR887221} if there is a constant $K$ such that for any integer $m$, there exists an integer $d\ge m$ such that $A$ is isomorphic to $M_d(B)$ for some $C^\ast$-algebra $B$ with $\tsr(B)\le K$.
To verify the assumption, we need the following two lemmas.
\begin{lem}
\label{lem_tsr_Roe}
The topological stable ranks of $C^\ast_u(|\mathbb{Z}^n|)$ and $C^\ast(|\mathbb{Z}^n|)$ are estimated as
\[
\tsr(C^\ast_u(|\mathbb{Z}^n|))\le n+1
\quad
\text{and}
\quad
\tsr(C^\ast(|\mathbb{Z}^n|))\le n+1.
\]
\end{lem}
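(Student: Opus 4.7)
The plan is to realize both algebras as iterated crossed products by $\mathbb{Z}$ and apply Rieffel's inequality $\tsr(A\rtimes_\alpha \mathbb{Z})\le \tsr(A)+1$ from \cite{MR693043} exactly $n$ times. Using the isomorphisms
\[
C^\ast_u(|\mathbb{Z}^n|)\cong \ell^\infty(\mathbb{Z}^n,\mathbb{C})\rtimes \mathbb{Z}^n,\qquad C^\ast(|\mathbb{Z}^n|)\cong \ell^\infty(\mathbb{Z}^n,\mathcal{K}(\mathcal{H}))\rtimes \mathbb{Z}^n
\]
recalled in Section~\ref{section_finprop}, I would split the translation action of $\mathbb{Z}^n$ into $n$ commuting actions of $\mathbb{Z}$ along the coordinate directions; since they commute, the full crossed product is the $n$-fold iteration of crossed products by $\mathbb{Z}$, with each single-$\mathbb{Z}$ action extending naturally to the algebra obtained at the previous stage.

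This reduces the problem to showing that both base algebras have topological stable rank $1$. The commutative case $\ell^\infty(\mathbb{Z}^n,\mathbb{C})=C(\beta\mathbb{Z}^n)$ follows immediately from Rieffel's formula $\tsr(C(X))=\lfloor\dim X/2\rfloor+1$ for commutative $C^\ast$-algebras, since the Stone--\v{C}ech compactification $\beta\mathbb{Z}^n$ is totally disconnected. For $\ell^\infty(\mathbb{Z}^n,\mathcal{K}(\mathcal{H}))$ I would use a spectral-perturbation argument: given an element $\mu+(f_i)_i$ of the unitization and $\varepsilon>0$, take a nearby nonzero scalar $\mu'$ (of modulus less than $\varepsilon/2$ if $\mu=0$), and for each $i$ perturb $f_i$ by a finite-rank correction $h_i$ of norm less than $\varepsilon/2$ that shifts the finitely many eigenvalues of $f_i$ lying within a small disk about $-\mu'$ out of a smaller concentric disk. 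This produces the required uniformly invertible approximation in the unitization. Iterating Rieffel's inequality $n$ times then yields $\tsr\le 1+n=n+1$ for both algebras.

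The main obstacle is arranging the spectral perturbation uniformly in $i\in\mathbb{Z}^n$. Each compact $f_i$ has only finitely many eigenvalues outside a disk of fixed radius about $0$, but this count need not be bounded uniformly in $i$; uniformity of the correction relies on the observation that the norm of a scalar multiple of a spectral projection is independent of the rank of the projection, so one can achieve $\|h_i\|<\varepsilon/2$ simultaneously for all $i$ while keeping each $h_i$ in $\mathcal{K}(\mathcal{H})$.
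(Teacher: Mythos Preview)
Your strategy is exactly the paper's: show the base algebras $\ell^\infty(\mathbb{Z}^n,\mathbb{C})$ and $\ell^\infty(\mathbb{Z}^n,\mathcal{K}(\mathcal{H}))$ have $\tsr=1$, then apply Rieffel's bound $\tsr(A\rtimes\mathbb{Z})\le\tsr(A)+1$ from \cite{MR693043} inductively $n$ times along the iterated crossed-product decomposition. The paper simply asserts density of invertibles in the base algebras (in the unitization, for the non-unital one) and cites \cite[Proposition 3.1]{MR693043}; you attempt to supply the argument, and the commutative case via the zero-dimensionality of $\beta\mathbb{Z}^n$ is fine.

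The gap is in the $\mathcal{K}(\mathcal{H})$-valued case. Pushing the \emph{eigenvalues} of each $f_i$ away from $-\mu'$ does not yield a uniform bound on $\|(\mu'+f_i+h_i)^{-1}\|$, because for non-normal operators the resolvent norm is not controlled by the distance to the spectrum. Concretely, with $\mu'=1$ take $f_i=-P_i+N_i$, where $P_i$ is the orthogonal projection onto the first $i$ basis vectors of $\mathcal{H}$ and $N_i$ the nilpotent Jordan block on that span; then $\|f_i\|\le2$, the spectral projection for the eigenvalue $-1$ is exactly $P_i$, and your recipe $h_i=\delta P_i$ gives $1+f_i+h_i=(I-P_i)+\delta P_i+N_i$, whose restriction to $P_i\mathcal{H}$ is $\delta I+N_i$ with inverse of norm at least $|\delta|^{-i}\to\infty$. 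The fix is to perturb singular values instead: each $T_i=\mu'+f_i$ is Fredholm of index zero, so the partial isometry in its polar decomposition extends to a unitary $V_i$, and $T_i'=V_i\max(|T_i|,\delta)$ satisfies $\|T_i'-T_i\|\le\delta$, has $T_i'-T_i$ of finite rank (since the essential spectrum of $|T_i|$ is $\{|\mu'|\}$), and $\|(T_i')^{-1}\|\le\delta^{-1}$ uniformly in $i$.
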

\begin{rem}
We will see that both $C^\ast_u(|\mathbb{Z}^n|)$ and $C^\ast(|\mathbb{Z}^n|)$ are tsr-boundedly divisible using this lemma.
Thus we will actually obtain the estimates $\tsr(C^\ast_u(|\mathbb{Z}^n|))\le2$ and $\tsr(C^\ast(|\mathbb{Z}^n|))\le2$ by \cite[Proposition 4.6]{MR887221}.
\end{rem}
\begin{proof}
Let $A=\mathbb{C}$ or $\mathcal{K}(\mathcal{H})$.
Since the invertible elements in $\ell^\infty(\mathbb{Z}^n,\mathbb{C})$ and $\mathbb{C}\oplus\ell^\infty(\mathbb{Z}^n,\mathcal{K}(\mathcal{H}))$ are dense, we have
\[
\tsr(\ell^\infty(\mathbb{Z}^n,A))=1
\]
by \cite[Proposition 3.1]{MR693043}.
Considering the restricted action of $\mathbb{Z}^m\subset\mathbb{Z}^n$ on the first $m$ factors of $\mathbb{Z}^n$, we obtain the isomorphism
\[
\ell^\infty(\mathbb{Z}^n,A)\rtimes\mathbb{Z}^{m+1}
\cong
(\ell^\infty(\mathbb{Z}^n,A)\rtimes\mathbb{Z}^m)\rtimes\mathbb{Z}.
\]
Thus, by \cite[Theorem 7.1]{MR693043}, we get the desired estimates on $\tsr(C^\ast_u(|\mathbb{Z}^n|))$ and $\tsr(C^\ast(|\mathbb{Z}^n|))$.
\end{proof}
\begin{lem}
\label{lem_divisibilityRoe}
For any integer $d\ge1$, there exist isomorphisms
\[
\phi\colon C^\ast_u(|\mathbb{Z}^n|)\cong M_d(C^\ast_u(|\mathbb{Z}^n|))
\quad
\text{and}
\quad
\phi\colon C^\ast(|\mathbb{Z}^n|)\cong M_d(C^\ast(|\mathbb{Z}^n|)).
\]
\end{lem}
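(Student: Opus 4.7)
The plan is to produce the two isomorphisms by choosing, in each case, a unitary identification of the underlying Hilbert space with $\mathbb{C}^d$-many copies of itself and then checking that conjugation by this unitary preserves the defining conditions (finite propagation, and compactness of the matrix entries in the Roe case).

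For the Roe algebra, the cleanest route is to absorb the factor $\mathbb{C}^d$ into the auxiliary Hilbert space $\mathcal{H}$. Fix any unitary $W\colon \mathbb{C}^d\otimes\mathcal{H}\to\mathcal{H}$ coming from an isomorphism of separable Hilbert spaces, and let $\widetilde W=\operatorname{id}_{\ell^2(\mathbb{Z}^n)}\otimes W$. Conjugation by $\widetilde W$ induces a $\ast$-isomorphism
\[
M_d(\mathcal{B}(\ell^2(\mathbb{Z}^n)\otimes\mathcal{H}))
=\mathcal{B}(\ell^2(\mathbb{Z}^n)\otimes\mathbb{C}^d\otimes\mathcal{H})
\xrightarrow{\cong}
\mathcal{B}(\ell^2(\mathbb{Z}^n)\otimes\mathcal{H}).
\]
The propagation of an operator depends only on the $\mathbb{Z}^n$-indices, so it is unchanged by this conjugation; and each block $(g,h)$-entry of an element of $M_d(C^\ast(|\mathbb{Z}^n|))$ is a $d\times d$ matrix over $\mathcal{K}(\mathcal{H})$, which maps to an element of $\mathcal{K}(\mathcal{H})$ via $W$. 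Hence the isomorphism restricts to the desired $M_d(C^\ast(|\mathbb{Z}^n|))\cong C^\ast(|\mathbb{Z}^n|)$.

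For the uniform Roe algebra this trick is not available and we instead exploit a bounded-distortion partition of $\mathbb{Z}^n$ into $d$ pieces. Let $H=d\mathbb{Z}\times\mathbb{Z}^{n-1}\le\mathbb{Z}^n$, which has index $d$, and write $\mathbb{Z}^n=\bigsqcup_{i=0}^{d-1}X_i$ for the coset decomposition with $X_i=(i,0,\dots,0)+H$. Each of the affine bijections $\phi_i\colon \mathbb{Z}^n\to X_i$, $(k_1,\dots,k_n)\mapsto(i+dk_1,k_2,\dots,k_n)$, is a quasi-isometry with respect to the restricted metric: there are constants $C_1,C_2$ (depending only on $d,n$) with
\[
C_1^{-1}\,\|\vec k-\vec k'\|-C_2\le\|\phi_i(\vec k)-\phi_j(\vec k')\|\le C_1\,\|\vec k-\vec k'\|+C_2
\]
for all $i,j$ and $\vec k,\vec k'$. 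Assembling the $\phi_i$ gives a bijection $\Phi\colon\{0,\dots,d-1\}\times\mathbb{Z}^n\to\mathbb{Z}^n$ and hence a unitary $U\colon\mathbb{C}^d\otimes\ell^2(\mathbb{Z}^n)\to\ell^2(\mathbb{Z}^n)$ sending $e_i\otimes\delta_{\vec k}$ to $\delta_{\phi_i(\vec k)}$; conjugation by $U$ gives a $\ast$-isomorphism $M_d(\mathcal{B}(\ell^2(\mathbb{Z}^n)))\cong\mathcal{B}(\ell^2(\mathbb{Z}^n))$.

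The main point to verify—and in essence the only real step—is that this isomorphism restricts to $M_d(C^\ast_u(|\mathbb{Z}^n|))\cong C^\ast_u(|\mathbb{Z}^n|)$. This is where the quasi-isometry estimate is used: an operator $T$ on $\ell^2(\mathbb{Z}^n)$ of propagation $\le R$ produces block entries $(U^\ast TU)_{ij}$ whose $(\vec k,\vec k')$-component equals $T_{\phi_i(\vec k),\phi_j(\vec k')}$, which vanishes unless $\|\vec k-\vec k'\|\le C_1(R+C_2)$, so each block has finite propagation; conversely, an element of $M_d(C^\ast_u(|\mathbb{Z}^n|))$ with all block propagations $\le R$ produces an operator on $\ell^2(\mathbb{Z}^n)$ of propagation $\le C_1 R+C_2$. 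Taking norm closures turns these assertions on finite-propagation operators into the claimed isomorphism, and this is the step I expect to require the most care; everything else is formal.
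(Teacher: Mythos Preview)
Your argument is correct. For the uniform Roe algebra your construction is exactly the paper's: the paper writes down the map $\phi$ sending $T$ to the $d\times d$ block whose $((i_1,\dots,i_n),(j_1,\dots,j_n))$-entry has components $T_{(di_1+k,\dots,i_n),(dj_1+l,\dots,j_n)}$, which is precisely conjugation by your unitary $U$ built from the coset bijections $\phi_i(k_1,\dots,k_n)=(i+dk_1,k_2,\dots,k_n)$. The paper simply asserts that the restriction lands in the right place, whereas you supply the quasi-isometry estimate that makes this explicit. The one genuine difference is in the Roe case: you absorb the factor $\mathbb{C}^d$ into the auxiliary Hilbert space $\mathcal{H}$ via a unitary $W\colon\mathbb{C}^d\otimes\mathcal{H}\to\mathcal{H}$, giving a one-line argument that leaves propagation untouched; the paper instead reuses the same block formula with $V=\mathcal{H}$ in place of $V=\mathbb{C}$, treating both algebras uniformly. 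Your route is slightly slicker for $C^\ast(|\mathbb{Z}^n|)$ alone, while the paper's buys a single formula covering both cases and is the one referred back to later (e.g.\ in Lemma~\ref{lem_BSW_isom_pi0} and Proposition~\ref{prp_stability_BSW}).
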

\begin{proof}
Let $V=\mathbb{C}$ or $\mathcal{H}$.
According to the decomposition
\[
\ell^2(\mathbb{Z}^n)\otimes V
=
\bigoplus_{(i_1,\ldots,i_n)\in\mathbb{Z}^n}V_{(i_1,\ldots,i_n)},
\quad
V_{(i_1,\ldots,i_n)}
\cong V,
\]
we have the matrix expression for $T\in\mathcal{B}(\ell^2(\mathbb{Z}^n)\otimes V)$
\[
T=(T_{(i_1,\ldots,i_n)(j_1,\ldots,j_n)})_{(i_1,\ldots,i_n)(j_1,\ldots,j_n)},
\quad
T_{(i_1,\ldots,i_n)(j_1,\ldots,j_n)}
\colon
V_{(j_1,\ldots,j_n)}\to V_{(i_1,\ldots,i_n)}.
\]
Consider the map $\phi\colon\mathcal{B}(\ell^2(\mathbb{Z}^n)\otimes V)\to M_d(\mathcal{B}(\ell^2(\mathbb{Z}^n)\otimes V))$ given by
\[
\phi(T)_{(i_1,\ldots,i_n)(j_1,\ldots,j_n)}=
\begin{pmatrix}
T_{(di_1,\ldots,i_n)(dj_1,\ldots,j_n)} & \cdots & T_{(di_1,\ldots,i_n)(dj_1+d-1,\ldots,j_n)} \\
\vdots & \ddots & \vdots \\
T_{(di_1+d-1,\ldots,i_n)(dj_1,\ldots,j_n)} &\cdots & T_{(di_1+d-1,\ldots,i_n)(dj_1+d-1,\ldots,j_n)}
\end{pmatrix}
\in M_d(\mathcal{B}(V)).
\]
The restrictions to $C^\ast_u(|\mathbb{Z}^n|)\subset\mathcal{B}(\ell^2(\mathbb{Z}^n))$ and $C^\ast(|\mathbb{Z}^n|)\subset\mathcal{B}(\ell^2(\mathbb{Z}^n)\otimes\mathcal{H})$ are desired isomorphisms.
\end{proof}

\begin{rem}
When $n=1$, the map $\phi$ is just taking the block matrix of which each block is a $d\times d$-matrix.
\end{rem}

\begin{proof}[Proof of Theorem \ref{thm_stability}]
By Lemmas \ref{lem_tsr_Roe} and \ref{lem_divisibilityRoe}, we can apply Theorem \ref{thm_rieffel_stability} to $C^\ast_u(|\mathbb{Z}^n|)$ and $C^\ast(|\mathbb{Z}^n|)$.
This completes the proof of the theorem.
\end{proof}

\section{Bott periodicity}
\label{section_Bott}

Let us recall the Bott periodicity of $C^\ast$-algebras here.
Let $A$ be a $C^\ast$-algebra, which might be non-unital.
The direct sum $\mathbb{C}\oplus A$ is considered as the unitization with unit $(1,0)\in\mathbb{C}\oplus A$.
Define the unitary group $\U'_d(A)$ by
\[
\U'_d(A)=\{U\in\U_n(\mathbb{C}\oplus A)\mid U-(I_d,0)\in M_d(A)\}.
\]
If $A$ is already unital, we have a canonical isomorphism $\U'_d(A)\cong\U_d(A)$.
So we use the same symbol $\U_d(A)$ for $\U'_d(A)$ even if $A$ is not unital.

Consider the following space of continuous functions:
\[
C_0(\mathbb{R}^m,A)
=
\{T\colon\mathbb{R}^m\to A\mid T\text{ is continuous and }\lim_{|z|\to\infty}T(z)=0\}.
\]
This is a $C^\ast$-algebra without unit.
Notice that $C_0(\mathbb{R}^m,A)$ is isomorphic to the space $\Omega^mA$ of based maps from the $m$-sphere $S^m$ to $A$ where the basepoint $\ast\in S^m$ is mapped to $0\in A$.

Set the element
\[
p_B(z)=\frac{1}{1+|z|^2}
\begin{pmatrix}
|z|^2 & z \\
\bar{z} & 1
\end{pmatrix}
\in M_2(\mathbb{C}\oplus C_0(\mathbb{R}^2,\mathbb{C}))
\quad
(z\in\mathbb{R}^2),
\]
where we identify $\mathbb{R}^2\cong\mathbb{C}$ in the matrix entries.
The \textit{Bott map} $\beta\colon A\to M_2(A\oplus C_0(\mathbb{R}^2,A))$ is a $\ast$-homomorphism defined by
\[
\beta(a)=p_B
\begin{pmatrix}
a & 0 \\
0 & a
\end{pmatrix}
=
\begin{pmatrix}
a & 0 \\
0 & a
\end{pmatrix}
p_B.
\]
Then we have the commutative square of unital $C^\ast$-algebras
\[
\xymatrix{
\mathbb{C}\oplus A \ar[r]^-\epsilon \ar[d]_-\beta
& \mathbb{C} \ar[d]^-{\eta} \\
M_2(\mathbb{C}\oplus A\oplus C_0(\mathbb{R}^2,A)) \ar[r]^-\epsilon
& M_2(\mathbb{C}\oplus A)
}
\]
where $\epsilon\colon\mathbb{C}\oplus A\to\mathbb{C}$ and $\epsilon\colon M_2(\mathbb{C}\oplus A\oplus C_0(\mathbb{R}^2,A))\to M_2(\mathbb{C}\oplus A)$ are the projections and $\eta\colon\mathbb{C}\to M_2(\mathbb{C}\oplus A)$ is the unit map.
This square induces the $\ast$-homomorphism between the kernels of $\epsilon$:
\[
\beta\colon A\to M_2(C_0(\mathbb{R}^2,A)).
\]
We call this $\beta$ the \textit{Bott map} as well.
It is natural in the following sense:
if $f\colon A\to B$ is a $\ast$-homomorphism between $C^\ast$-algebras, then the following square commutes:
\[
\xymatrix{
A \ar[r]^-{f} \ar[d]_-\beta
& B \ar[d]^-\beta \\
M_2(C_0(\mathbb{R}^2,A)) \ar[r]^-{f_\ast}
& M_2(C_0(\mathbb{R}^2,B))
}
\]

\begin{prp}
\label{prp_Bottper}
The Bott map $\beta\colon A\to M_2(C_0(\mathbb{R}^2,A))$ induces an isomorphism on $K$-theory.
\end{prp}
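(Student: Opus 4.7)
The plan is to prove the result by reducing to the case $A=\mathbb{C}$ via an external-product formalism, and then extending to general $A$ using a standard Bott periodicity argument for $C^\ast$-algebras.

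First I would reinterpret the Bott map. Since $M_2(C_0(\mathbb{R}^2,A))\cong C_0(\mathbb{R}^2,M_2(A))$, the formula $\beta(a)=p_B\cdot\mathrm{diag}(a,a)$ exhibits $\beta_\ast$ as the exterior product with the Bott class
\[
b:=[p_B]-[e]\in K_0(C_0(\mathbb{R}^2)),\qquad e=\mathrm{diag}(0,1),
\]
under the natural pairing $K_\ast(C_0(\mathbb{R}^2))\otimes K_\ast(A)\to K_\ast(C_0(\mathbb{R}^2,A))$. Naturality of $\beta$ in $A$, already recorded in the excerpt, implies that checking that $\beta_\ast$ is an isomorphism for all $A$ is equivalent to showing that exterior multiplication by $b$ is an isomorphism for all $A$.

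Next I would dispose of the case $A=\mathbb{C}$ by a direct computation. Using the one-point compactification $\mathbb{R}^2\cup\{\infty\}=S^2$, we identify $C_0(\mathbb{R}^2)$ with the kernel of evaluation at $\infty$ on $C(S^2)$, so that $K_0(C_0(\mathbb{R}^2))\cong\widetilde{K}^0(S^2)\cong\mathbb{Z}$ and $K_1(C_0(\mathbb{R}^2))\cong\widetilde{K}^1(S^2)=0$. A direct check with the formula for $p_B$ shows that the projection $p_B$ represents precisely the tautological (Hopf) line bundle over $S^2$, so $b$ is a generator of $K_0(C_0(\mathbb{R}^2))$. Hence $\beta_\ast\colon K_\ast(\mathbb{C})\to K_\ast(C_0(\mathbb{R}^2))$ sends $1$ to a generator and is therefore an isomorphism.

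The main obstacle is upgrading the case $A=\mathbb{C}$ to arbitrary $A$. My plan is to invoke Cuntz's proof of Bott periodicity via the Toeplitz extension. The extension $0\to\mathcal{K}\otimes A\to\mathcal{T}\otimes A\to C(S^1)\otimes A\to 0$, together with the fact that $\mathcal{T}$ is $KK$-equivalent to $\mathbb{C}$, yields the suspension isomorphism $K_i(A)\cong K_{i+1}(C_0(\mathbb{R},A))$ from the associated six-term exact sequence. Iterating this once more yields an abstract isomorphism $K_i(A)\cong K_i(C_0(\mathbb{R}^2,A))$. The technical step that remains is to identify this abstract isomorphism with exterior multiplication by $b$; by naturality of both constructions in $A$ and the fact that external products satisfy the projection formula, this identification reduces to the already-verified case $A=\mathbb{C}$. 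Alternatively, one may simply invoke the statement as it appears in a standard reference (e.g., Wegge-Olsen or R\o rdam--Larsen--Laustsen), which is how one would realistically proceed in a paper whose main contributions lie elsewhere.
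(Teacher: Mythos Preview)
Your proposal is correct. The paper itself does not give a proof of this proposition; it only supplies a remark pointing to Blackadar's book, specifically to the identification of the generator of $K_0(C_0(\mathbb{R}^2,\mathbb{C}))$ and to the K\"unneth theorem for tensor products. Your first two steps---recasting $\beta_\ast$ as exterior product with the Bott class $b$ and verifying that $b$ generates $K_0(C_0(\mathbb{R}^2))$---match the paper's indicated route exactly.

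Where you diverge is in the passage from $A=\mathbb{C}$ to general $A$. The paper's remark invokes the K\"unneth theorem directly: since $C_0(\mathbb{R}^2,A)\cong C_0(\mathbb{R}^2)\otimes A$ and $K_\ast(C_0(\mathbb{R}^2))$ is free abelian, the exterior product map $K_\ast(C_0(\mathbb{R}^2))\otimes K_\ast(A)\to K_\ast(C_0(\mathbb{R}^2)\otimes A)$ is an isomorphism, and tensoring with the generator $b$ is then visibly an isomorphism. You instead produce the abstract isomorphism via Cuntz's Toeplitz-extension argument and then identify it with exterior product by $b$ through naturality. Both routes are standard and valid; the K\"unneth route is slightly shorter here because the required hypothesis (free $K$-theory on one tensor factor) is immediate, while your Toeplitz route has the advantage of being more self-contained and not relying on the K\"unneth machinery. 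Your closing remark, that in practice one would simply cite a reference, is precisely what the paper does.
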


\begin{rem}
This can be seen as a formulation of the Bott periodicity.
If you wish to deduce this proposition from the results appearing in \cite{MR859867}, it follows from the observation 9.2.10 on the generator of $K_0(C_0(\mathbb{R}^2,\mathbb{C}))$ and the K\"{u}nneth theorem for tensor products (Theorem 23.1.3).
\end{rem}

The Bott periodicity provides the natural homotopy equivalence
\[
\U_\infty(A)
\xrightarrow{\beta}
\U_\infty(M_2(C_0(\mathbb{R}^2,A)))
\simeq
\Omega^2\U_\infty(A),
\]
which is a group homomorphism.
Thus we obtain the following proposition on infinite loop structure.

\begin{prp}
\label{prp_infloop}
The unitary group $\U_\infty(A)$ of a $C^\ast$-algebra $A$ is equipped with a canonical infinite loop space structure such that the map $\U_\infty(A)\to\U_\infty(B)$ induced from a $\ast$-homomorphism $A\to B$ is an infinite loop map.
Moreover, the underlying loop structure of $\U_\infty(A)$ coincides with the group structure of $\U_\infty(A)$.
\end{prp}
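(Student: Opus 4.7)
The plan is to promote the Bott equivalence $\beta\colon\U_\infty(A)\to\Omega^2\U_\infty(A)$ recorded just before the statement into a full infinite loop space structure by assembling it into an $\Omega$-spectrum, and then to identify the resulting first-level loop product with the group product by an Eckmann--Hilton comparison.

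First I would define a connective $\Omega$-prespectrum $\mathbf{E}(A)$ by setting $E_{2k}(A)=\U_\infty(A)$ and $E_{2k+1}(A)=\Omega\U_\infty(A)$ for every $k\ge 0$; the structure maps $\sigma_n\colon E_n(A)\to\Omega E_{n+1}(A)$ are taken to be the Bott map $\beta$ when $n$ is even and the identity when $n$ is odd. Proposition \ref{prp_Bottper}, combined with the standard identification of $K_\ast(A)$ with the homotopy groups of $\U_\infty(A)$, shows that $\beta$ induces an isomorphism on $\pi_\ast$, hence is a weak equivalence, so each $\sigma_n$ is. Thus $\mathbf{E}(A)$ is an $\Omega$-spectrum and $\U_\infty(A)=E_0(A)$ inherits an infinite loop space structure. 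Naturality is then immediate: a $\ast$-homomorphism $f\colon A\to B$ yields levelwise maps $\mathbf{E}(A)\to\mathbf{E}(B)$ that strictly commute with the structure maps, by the naturality of $\beta$ displayed in Section \ref{section_Bott}, so the induced map $\U_\infty(A)\to\U_\infty(B)$ is an infinite loop map.

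For the final assertion, the underlying loop multiplication on $E_0(A)$ is the one transported from the loop-concatenation product on $\Omega E_1(A)=\Omega^2\U_\infty(A)$ via $\beta$. Since $\beta$ is a continuous group homomorphism with respect to pointwise multiplication on the target, and since pointwise multiplication and loop concatenation on $\Omega^k G$ for a topological group $G$ and $k\ge 1$ coincide up to homotopy by the Eckmann--Hilton argument, this loop product agrees, under $\beta$, with the original group product on $\U_\infty(A)$.

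The main technical subtlety will be the distinction between a strict $\Omega$-spectrum (where the adjoints of the structure maps are homeomorphisms) and the weak prespectrum constructed above; this can be handled either by a functorial fibrant replacement in a model category of spectra, or by observing that for a connective $\Omega$-prespectrum of spaces of the homotopy type of CW complexes the naive notion already produces a well-defined infinite loop space structure, natural in $A$.
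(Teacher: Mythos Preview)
Your argument is correct and is precisely the elaboration of the paper's own justification, which consists only of the single sentence preceding the proposition: the natural group-homomorphism weak equivalence $\beta\colon\U_\infty(A)\to\Omega^2\U_\infty(A)$ from Bott periodicity yields the infinite loop structure, and you have spelled out the resulting $\Omega$-spectrum and the Eckmann--Hilton comparison that the paper leaves implicit.

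One terminological slip: the spectrum $\mathbf{E}(A)$ you build is not connective in general, since $\pi_{-1}(\mathbf{E}(A))\cong\pi_0(\Omega\U_\infty(A))\cong K_0(A)$ and $\pi_{-2}(\mathbf{E}(A))\cong K_1(A)$ need not vanish; it is a periodic (in fact $2$-periodic) $\Omega$-spectrum, and the word ``connective'' should simply be deleted. This does not affect the argument.
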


\begin{rem}
The last sentence in the proposition means that the classifying space $B\U_\infty(A)$ of the topological group $\U_\infty(A)$ is homotopy equivalent to the identity component of $\Omega\U_\infty(A)$.
\end{rem}

\section{Segal--Wilson restricted unitary group}
\label{section_SW}
To study the homotopy type of $\U_1(C^\ast_u(|\mathbb{Z}|))$, we will relate it with other spaces.
One is the Segal--Wilson restricted unitary group $\U_1(B^{\mathrm{SW}})$ and the other is the unitary group of the Roe algebra $\U_1(C^\ast(|\mathbb{Z}|))$.
We recall the former in this section.

We have another matrix expression for $T\in \mathcal{B}(\ell^2(\mathbb{Z}))$ as
\[
T=
\begin{pmatrix}
T_{--} & T_{-+} \\
T_{+-} & T_{++}
\end{pmatrix},
\]
where
\begin{align*}
& T_{--}\colon\ell^2(\mathbb{Z}_{<0})\to\ell^2(\mathbb{Z}_{<0}),
\quad  T_{-+}\colon\ell^2(\mathbb{Z}_{\ge0})\to\ell^2(\mathbb{Z}_{<0}), \\
& T_{+-}\colon\ell^2(\mathbb{Z}_{<0})\to\ell^2(\mathbb{Z}_{\ge0}),
\quad T_{++}\colon\ell^2(\mathbb{Z}_{\ge0})\to\ell^2(\mathbb{Z}_{\ge0}).
\end{align*}

\begin{dfn}
We define the $C^\ast$-algebra $B^{\mathrm{SW}}$ by
\begin{align*}
B^{\mathrm{SW}}:=
\{
T\in \B(\ell^2(\mathbb{Z}))\mid\text{$T_{-+},T_{+-}$ are compact}
\}.
\end{align*}
\end{dfn}

The symbol ``SW'' stands for Segal--Wilson.
The unitary group $\U_1(B^{\mathrm{SW}})$ is called the \textit{restricted unitary group} in the work of Segal and Wilson \cite{MR783348}.
They used it as a model of the infinite Grassmannian.

\begin{lem}[Segal--Wilson]
\label{lem_pi0_BSW}
The space $\U_1(B^{\mathrm{SW}})$ has the homotopy type of $\mathbb{Z}\times B\U_\infty(\mathbb{C})$.
Moreover, the map
\[
\pi_0(\U_1(B^{\mathrm{SW}}))\to\mathbb{Z},
\quad
[U]\mapsto\ind(U_{++}),
\]
is bijective, where $\ind(U_{++})$ denotes the Fredholm index of the Fredholm operator $U_{++}$.
\end{lem}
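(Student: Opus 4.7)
The plan is to reduce the statement to a transitive action of $\U_1(B^{\mathrm{SW}})$ on the Segal--Wilson restricted Grassmannian and invoke Kuiper's theorem. First, I would observe that for any $U\in\U_1(B^{\mathrm{SW}})$ the unitarity relations $U^\ast U=UU^\ast=1$ combined with the compactness of $U_{-+}$ and $U_{+-}$ give
\[
U_{++}^\ast U_{++}=I_{+}-U_{+-}^\ast U_{+-},
\qquad
U_{++}U_{++}^\ast=I_{+}-U_{-+}U_{-+}^\ast,
\]
both of which are compact perturbations of the identity on $\ell^2(\mathbb{Z}_{\ge0})$. Hence $U_{++}$ is Fredholm, and since the index is continuous on Fredholm operators while $U\mapsto U_{++}$ is norm-continuous, the map $[U]\mapsto\ind(U_{++})$ is a well-defined function $\pi_0(\U_1(B^{\mathrm{SW}}))\to\mathbb{Z}$. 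Surjectivity is immediate from Example \ref{ex_shift}: the shift $S_1$ lies in $\U_1(B^{\mathrm{SW}})$ and $(S_1)_{++}$ is the unilateral shift of index $-1$, so the powers $S_1^k$ realize every integer.

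To upgrade this to a homotopy equivalence with $\mathbb{Z}\times B\U_\infty(\mathbb{C})$, I would consider the block-diagonal subgroup
\[
H=\U(\ell^2(\mathbb{Z}_{<0}))\times\U(\ell^2(\mathbb{Z}_{\ge0}))\subset\U_1(B^{\mathrm{SW}}),
\]
acting on $\U_1(B^{\mathrm{SW}})$ by right multiplication. The orbit of the base point $\ell^2(\mathbb{Z}_{\ge0})\subset\ell^2(\mathbb{Z})$ under $U\mapsto U\cdot\ell^2(\mathbb{Z}_{\ge0})$ identifies $\U_1(B^{\mathrm{SW}})/H$ with the restricted Grassmannian
\[
\Gr_{\mathrm{res}}
=
\{W\subset\ell^2(\mathbb{Z})\mid
\text{$W$ is closed, $\operatorname{pr}_{+}\colon W\to\ell^2(\mathbb{Z}_{\ge0})$ is Fredholm,
$\operatorname{pr}_{-}|_W$ is compact}\},
\]
and one obtains a principal $H$-bundle $H\to\U_1(B^{\mathrm{SW}})\to\Gr_{\mathrm{res}}$. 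By Kuiper's theorem each factor of $H$ is contractible, so the long exact sequence of homotopy groups collapses to a weak equivalence $\U_1(B^{\mathrm{SW}})\simeq\Gr_{\mathrm{res}}$.

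The remaining step, and the main obstacle, is the Pressley--Segal identification $\Gr_{\mathrm{res}}\simeq\mathbb{Z}\times B\U_\infty(\mathbb{C})$ with the integer component being the index of $\operatorname{pr}_{+}|_W$: components are parametrized by this index, and each component is shown to be homotopy equivalent to $B\U_\infty(\mathbb{C})$ by exhibiting it as a colimit of finite-dimensional Grassmannians via the filtration of $W$ by how far it differs from the standard polarization. Once this classical fact is in hand, chasing the identification shows that the component of $U\in\U_1(B^{\mathrm{SW}})$ corresponds to $\ind(\operatorname{pr}_{+}|_{U\cdot\ell^2(\mathbb{Z}_{\ge0})})=\ind(U_{++})$, which simultaneously establishes injectivity of the index map and the asserted homotopy type. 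I would either cite \cite{MR783348} for this identification or reprove it by constructing an explicit weak equivalence out of the canonical universal bundle over each component.
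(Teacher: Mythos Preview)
The paper does not prove this lemma at all: it is stated as a result of Segal--Wilson and attributed to \cite{MR783348}, with no argument given. Your sketch is the standard route to that result (action on the restricted Grassmannian, Kuiper contractibility of the block-diagonal stabilizer, then the Pressley--Segal identification of $\Gr_{\mathrm{res}}$), and it is essentially correct. Since you end up citing \cite{MR783348} for the crucial step anyway, your write-up is really an expanded justification of why the citation applies rather than a genuinely independent proof; that is fine, but you should be explicit that the heart of the matter is being quoted.

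One small correction: with the conventions of Example~\ref{ex_shift}, $S_1$ sends $e_k$ to $e_{k-1}$, so $(S_1)_{++}$ is the \emph{backward} shift on $\ell^2(\mathbb{Z}_{\ge0})$, which has Fredholm index $+1$, not $-1$. This matches the line immediately after the lemma in the paper, where $\ind S^n=n$. Your surjectivity argument is unaffected, since integer powers of $S_1$ still hit every value, but the sign should be fixed.
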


Let $S=S_{+1}\in B^{\mathrm{SW}}$ the shift operator as in Example \ref{ex_shift}.
We have $\ind S^n=n$.

The goal of this section is to see the following.

\begin{prp}
\label{prp_stability_BSW}
The stabilizing maps
\[
\GL_1(B^{\mathrm{SW}})\to\GL_\infty(B^{\mathrm{SW}})
\quad\text{and}\quad
\U_1(B^{\mathrm{SW}})\to\U_\infty(B^{\mathrm{SW}})
\]
are homotopy equivalences.
\end{prp}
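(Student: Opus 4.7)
The plan is to apply Rieffel's Theorem \ref{thm_rieffel_stability} to $B^{\mathrm{SW}}$, exactly as in the proof of Theorem \ref{thm_stability}. Thus it suffices to show $B^{\mathrm{SW}}$ is tsr-boundedly divisible, which breaks into two pieces: (i) isomorphisms $B^{\mathrm{SW}}\cong M_d(B^{\mathrm{SW}})$ for every $d\ge 1$, and (ii) a finite bound on $\tsr(B^{\mathrm{SW}})$.

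For (i), I would reuse the block-grouping construction of Lemma \ref{lem_divisibilityRoe}. The unitary $\ell^2(\mathbb{Z})\to\ell^2(\mathbb{Z})\otimes\mathbb{C}^d$ sending $e_n\mapsto e_i\otimes e_a$ whenever $n=di+a$ with $a\in\{0,\dots,d-1\}$ induces an isomorphism $\phi\colon\mathcal{B}(\ell^2(\mathbb{Z}))\cong M_d(\mathcal{B}(\ell^2(\mathbb{Z})))$. The key observation is that $n<0$ is equivalent to $i<0$ under this identification, so the unitary preserves the splitting $\ell^2(\mathbb{Z})=\ell^2(\mathbb{Z}_{<0})\oplus\ell^2(\mathbb{Z}_{\ge0})$. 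Consequently the $-+$ block of $T$ corresponds to the direct sum of the $-+$ blocks of the $d\times d$ entries of $\phi(T)$, so $T_{-+}$ is compact if and only if every matrix entry of $\phi(T)$ has a compact $-+$ block, and analogously for $T_{+-}$. Hence $\phi$ restricts to the required isomorphism $B^{\mathrm{SW}}\cong M_d(B^{\mathrm{SW}})$.

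For (ii), I would exploit the short exact sequence
\[
0\to\mathcal{K}(\ell^2(\mathbb{Z}))\to B^{\mathrm{SW}}\to\mathcal{B}(\ell^2(\mathbb{Z}_{<0}))/\mathcal{K}\oplus\mathcal{B}(\ell^2(\mathbb{Z}_{\ge0}))/\mathcal{K}\to 0
\]
obtained by sending $T\in B^{\mathrm{SW}}$ to the pair $(T_{--}+\mathcal{K},T_{++}+\mathcal{K})$; compactness of $T_{-+}$ and $T_{+-}$ makes this map a $\ast$-homomorphism, the block-diagonal lift $(S_1,S_2)\mapsto\mathrm{diag}(S_1,S_2)$ gives surjectivity, and the kernel is clearly $\mathcal{K}(\ell^2(\mathbb{Z}))$. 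The ideal is an AF algebra after unitization, hence has topological stable rank one, and each Calkin-algebra summand of the quotient has finite topological stable rank. Combining these via Rieffel's bound on $\tsr$ under extensions from \cite{MR693043} yields a finite upper bound on $\tsr(B^{\mathrm{SW}})$.

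The main obstacle I expect is securing the finite $\tsr$ bound for the Calkin-algebra summands and invoking Rieffel's extension estimate in the form needed; alternatively one could bypass the quotient by a direct spectral-perturbation argument tailored to the block structure defining $B^{\mathrm{SW}}$. Once either route succeeds, together with (i) this confirms that $B^{\mathrm{SW}}$ is tsr-boundedly divisible, and Theorem \ref{thm_rieffel_stability} immediately yields the proposition.
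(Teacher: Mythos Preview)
Your divisibility argument (i) is fine and is essentially Lemma~\ref{lem_divisibilityRoe} specialised to $B^{\mathrm{SW}}$; the paper uses exactly this isomorphism $\phi$ in Lemma~\ref{lem_BSW_isom_pi0}. The real problem is (ii). You assert that ``each Calkin-algebra summand of the quotient has finite topological stable rank'', but this is not justified and is far from clear. The Calkin algebra $Q$ is properly infinite, so certainly $\tsr(Q)\ge 2$; Rieffel in fact proves in \cite{MR693043} that properly infinite von Neumann algebras have $\tsr=\infty$, and while $Q$ is not a von Neumann algebra, obtaining any finite bound on $\tsr(Q)$ is at best a hard problem and quite possibly impossible. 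Moreover, since the quotient map $B^{\mathrm{SW}}\to Q\oplus Q$ is surjective, Rieffel's inequality gives $\tsr(B^{\mathrm{SW}})\ge\tsr(Q)$, so you cannot sidestep $\tsr(Q)$ by arguing directly with $B^{\mathrm{SW}}$. The alternative ``direct spectral-perturbation argument'' you allude to is not developed enough to evaluate. In short, the proposal does not close the gap it itself flags as the main obstacle.

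The paper is aware of this difficulty: immediately after stating the proposition it remarks explicitly that the proof does \emph{not} use the stability machinery of Section~\ref{section_stability}. Instead it takes as input the Segal--Wilson identification $\U_1(B^{\mathrm{SW}})\simeq\mathbb{Z}\times B\U_\infty(\mathbb{C})$ (Lemma~\ref{lem_pi0_BSW}). Using the isomorphism $\phi$ from your step (i) it shows $\U_1(B^{\mathrm{SW}})\to\U_d(B^{\mathrm{SW}})$ is a $\pi_0$-isomorphism for every $d$ (Lemma~\ref{lem_BSW_isom_pi0}), computes $K_\ast(B^{\mathrm{SW}})$ (Lemma~\ref{lem_KBSW}), and then uses iterated Bott maps $\beta^i$ together with a short diagram chase (Lemma~\ref{lem_approx_Bott}) to upgrade the $\pi_0$-statement to all $\pi_{2i}$; the odd homotopy groups vanish on both sides because $K_0(B^{\mathrm{SW}})=0$. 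No $\tsr$ bound is needed anywhere.
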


To show this, we do not use a kind of stability as in Section \ref{section_stability}. 

\begin{lem}
\label{lem_BSW_isom_pi0}
For any integer $d\ge1$, the inclusion
\[
\U_1(B^{\mathrm{SW}})\to\U_d(B^{\mathrm{SW}})
\]
induces an isomorphism on $\pi_0$.
\end{lem}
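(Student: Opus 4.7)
The plan is to extend the Fredholm-index bijection of Lemma \ref{lem_pi0_BSW} from $\U_1$ to $\U_d$. For $U \in \U_d(B^{\mathrm{SW}})$ regarded as an operator on $\ell^2(\mathbb{Z})^d \cong \ell^2(\mathbb{Z}_{<0})^d \oplus \ell^2(\mathbb{Z}_{\ge 0})^d$, the off-diagonal blocks $U_{-+}$ and $U_{+-}$ are compact because each of their $d \times d$ entries lies in $\mathcal{K}$. Hence $U_{++}$ is Fredholm and $U \mapsto \ind(U_{++})$ defines a continuous map $\U_d(B^{\mathrm{SW}}) \to \mathbb{Z}$. Its restriction to $\U_1(B^{\mathrm{SW}})$ is the bijection of Lemma \ref{lem_pi0_BSW}, so the stabilization map is injective on $\pi_0$ and the extended index is surjective. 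The remaining task is to show that $\ind(U_{++}) = 0$ implies $U$ is path-connected to $I_d$.

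For this reduction I would work with the $C^\ast$-algebra short exact sequence
\[
0 \to \mathcal{K}(\ell^2(\mathbb{Z})) \to B^{\mathrm{SW}} \to Q_- \oplus Q_+ \to 0,
\]
where $Q_\pm$ is the Calkin algebra of $\ell^2(\mathbb{Z}_\pm)$ and the quotient sends $T$ to $(T_{--}, T_{++})$ modulo compacts. Writing the image of $U$ as $(\bar U_-, \bar U_+)$, the hypothesis $\ind(U_{++}) = 0$ together with the identity $\ind(U_{--}) = -\ind(U_{++})$ (a consequence of $U$ being unitary modulo its compact off-diagonal blocks) gives $[\bar U_\pm] = 0$ in $K_1(Q_\pm) \cong \mathbb{Z}$. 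Since $K_1(Q_\pm) \cong \pi_0\U_d(Q_\pm)$, with the isomorphism realized by the Fredholm index, each $\bar U_\pm$ is connected to the identity, yielding a continuous path $\bar V_t$ in $\U_d(Q_- \oplus Q_+)$ from $(\bar U_-, \bar U_+)$ to $(I, I)$.

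The heart of the argument is lifting $\bar V_t$ to a continuous path of unitaries in $\U_d(B^{\mathrm{SW}})$ with $V_0 = U$. I would subdivide $[0,1]$ finely enough that each $\bar V_t \bar V_{t_j}^{-1}$ lies in a neighborhood of $I$ on which the logarithm is defined, write $\bar V_t \bar V_{t_j}^{-1} = \exp(i\bar h_t)$ with $\bar h_t$ a continuous self-adjoint path, lift each $\bar h_t$ continuously to a self-adjoint $h_t \in M_d(B^{\mathrm{SW}})$, and exponentiate and concatenate. The continuous self-adjoint lifting exists because the block-diagonal subalgebra $\mathcal{B}(\ell^2(\mathbb{Z}_-)) \oplus \mathcal{B}(\ell^2(\mathbb{Z}_+))$ sits inside $B^{\mathrm{SW}}$ and surjects onto $Q_- \oplus Q_+$, so a Bartle--Graves continuous section followed by symmetrization supplies one. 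The resulting path ends at a unitary $V_1$ with $V_1 - I_d \in M_d(\mathcal{K})$, i.e.\ $V_1 \in \U_d(\mathbb{C} I + \mathcal{K}(\ell^2(\mathbb{Z})))$, and this last unitary group is path-connected since $\mathbb{C} I + \mathcal{K}$ has vanishing $K_1$ and stable rank one.

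The main obstacle is the unitary lifting step: one must produce a path of genuine unitaries (rather than mere invertibles) in $B^{\mathrm{SW}}$ extending the prescribed starting point $U$ and projecting to the given path in the quotient. The exponential method handles starting-point compatibility automatically and reduces the problem to continuous self-adjoint liftings, which is precisely where the block-diagonal subalgebra sitting inside $B^{\mathrm{SW}}$ becomes indispensable.
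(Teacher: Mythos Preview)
Your argument is correct, but it takes a substantially longer route than the paper's. The paper exploits a single structural observation: there is an explicit $\ast$-isomorphism $\phi\colon B^{\mathrm{SW}}\to M_d(B^{\mathrm{SW}})$, built exactly as in Lemma~\ref{lem_divisibilityRoe} by regrouping $\ell^2(\mathbb{Z})$ along the $d$ residue classes modulo~$d$. This immediately gives a homeomorphism $\U_1(B^{\mathrm{SW}})\cong\U_d(B^{\mathrm{SW}})$, hence $\pi_0(\U_d(B^{\mathrm{SW}}))\cong\mathbb{Z}$, and the only thing left to check is that the composite $\U_1(B^{\mathrm{SW}})\to\U_d(B^{\mathrm{SW}})\xrightarrow{\phi^{-1}}\U_1(B^{\mathrm{SW}})$ sends the shift $S$ to an element of index~$1$; an endomorphism of $\mathbb{Z}$ hitting a generator is an automorphism, so the inclusion is an isomorphism on $\pi_0$. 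That is the entire proof. Your approach instead rebuilds the Segal--Wilson bijection at the level of $\U_d$ via the extension $0\to\mathcal{K}\to B^{\mathrm{SW}}\to Q_-\oplus Q_+\to 0$ and a unitary path-lifting argument. This works, and the lifting via local exponentials and a Bartle--Graves section is carried out correctly, but note that it leans on the analogous $\pi_0$-stability for the Calkin algebra, which one typically justifies via $M_d(Q)\cong Q$---the same self-similarity trick the paper applies directly to $B^{\mathrm{SW}}$. The paper's route is shorter and meshes with the divisibility isomorphisms used elsewhere (Lemma~\ref{lem_divisibilityRoe}, proof of Proposition~\ref{prp_stability_BSW}); yours has the virtue of making the index description of $\pi_0(\U_d(B^{\mathrm{SW}}))$ explicit, though that is not needed later.
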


\begin{proof}
Consider the composite of the inclusion and the isomorphism $\phi\colon B^{\mathrm{SW}}\to M_d(B^{\mathrm{SW}})$ similar to the one in the proof of Lemma \ref{lem_divisibilityRoe}:
\[
\U_1(B^{\mathrm{SW}})\to\U_d(B^{\mathrm{SW}})\xrightarrow{\phi^{-1}}\U_1(B^{\mathrm{SW}}).
\]
It is easy to see that the image of the shift $S\in\U_1(B^{\mathrm{SW}})$ under this composite again has index $1$.
This implies the lemma.
\end{proof}

\begin{lem}
\label{lem_KBSW}
The $K$-theory of $B^{\mathrm{SW}}$ is computed as
\[
K_i(B^{\mathrm{SW}})
\cong
\begin{cases}
0 & i=0, \\
\mathbb{Z} & i=1,
\end{cases}
\]
where $K_1(B^{\mathrm{SW}})$ is generated by the shift $S\in\U_1(B^{\mathrm{SW}})$.
\end{lem}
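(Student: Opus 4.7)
The plan is to realise $B^{\mathrm{SW}}$ as the middle term of a short exact sequence of $C^\ast$-algebras whose outer terms have known $K$-theory, and then read off $K_\ast(B^{\mathrm{SW}})$ from the six-term exact sequence. The defining condition that $T_{-+}$ and $T_{+-}$ are compact for $T\in B^{\mathrm{SW}}$ makes multiplication in $B^{\mathrm{SW}}$ block-diagonal modulo compacts, so there is a $\ast$-homomorphism
\[
\pi\colon B^{\mathrm{SW}}\to\mathcal{Q}_-\oplus\mathcal{Q}_+,\quad T\mapsto([T_{--}],[T_{++}]),
\]
where $\mathcal{Q}_\pm$ denotes the Calkin algebra of $\ell^2(\mathbb{Z}_{<0})$ (respectively $\ell^2(\mathbb{Z}_{\ge 0})$). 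Surjectivity of $\pi$ follows from the existence of block-diagonal lifts $\operatorname{diag}(A_-,A_+)\in B^{\mathrm{SW}}$, and the kernel is exactly $\mathcal{K}(\mathcal{H})$ because the off-diagonal blocks of elements of $B^{\mathrm{SW}}$ are compact by definition.

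Feeding this extension into the six-term exact sequence, together with the computations $K_\ast(\mathcal{K}(\mathcal{H}))=(\mathbb{Z},0)$ and $K_\ast(\mathcal{Q}_\pm)=(0,\mathbb{Z})$, collapses the sequence to
\[
0\to K_1(B^{\mathrm{SW}})\to\mathbb{Z}\oplus\mathbb{Z}\xrightarrow{\partial}\mathbb{Z}\to K_0(B^{\mathrm{SW}})\to0.
\]
The boundary $\partial$ restricted to each summand is, up to sign, the standard Fredholm-index isomorphism $K_1(\mathcal{Q}_\pm)\xrightarrow{\cong}K_0(\mathcal{K})=\mathbb{Z}$, so altogether $\partial$ sends $([u_-],[u_+])$ to $\ind(\tilde u_-)+\ind(\tilde u_+)$ for Fredholm lifts $\tilde u_\pm$. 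This map is surjective with kernel $\cong\mathbb{Z}$, forcing $K_0(B^{\mathrm{SW}})=0$ and $K_1(B^{\mathrm{SW}})\cong\mathbb{Z}$.

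To identify the generator, I would appeal to Lemmas \ref{lem_pi0_BSW} and \ref{lem_BSW_isom_pi0}: since $\pi_0$ commutes with directed colimits and every $\U_1(B^{\mathrm{SW}})\to\U_d(B^{\mathrm{SW}})$ is a $\pi_0$-isomorphism, the composite $\U_1(B^{\mathrm{SW}})\to\U_\infty(B^{\mathrm{SW}})$ is one as well, so
\[
K_1(B^{\mathrm{SW}})=\pi_0(\U_\infty(B^{\mathrm{SW}}))\cong\pi_0(\U_1(B^{\mathrm{SW}}))\cong\mathbb{Z}
\]
via $[U]\mapsto\ind(U_{++})$; under this identification $[S]$ maps to $-1$ (the unilateral shift $S_{++}$ has Fredholm index $-1$), hence $[S]$ generates. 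The main obstacle is really just the bookkeeping surrounding $\partial$: one must confirm that the embedding $\mathcal{K}(\ell^2(\mathbb{Z}_{<0}))\oplus\mathcal{K}(\ell^2(\mathbb{Z}_{\ge 0}))\hookrightarrow\mathcal{K}(\mathcal{H})$ induces the summation map on $K_0$, and that the signs on the two index contributions are consistent; granting these standard identifications, the rest of the argument is purely formal.
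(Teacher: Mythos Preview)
Your argument is correct and takes a genuinely different route from the paper. The paper's proof is almost a one-liner: it uses Lemma~\ref{lem_pi0_BSW} (the Segal--Wilson homotopy equivalence $\U_1(B^{\mathrm{SW}})\simeq\mathbb{Z}\times B\U_\infty(\mathbb{C})$) together with the isomorphism $\phi\colon B^{\mathrm{SW}}\cong M_d(B^{\mathrm{SW}})$ implicit in Lemma~\ref{lem_BSW_isom_pi0} to read off $\pi_0$ and $\pi_1$ of every $\U_d(B^{\mathrm{SW}})$, and then identifies $K_i(B^{\mathrm{SW}})$ with the colimit of these. So the paper deduces the $K$-theory from the already-known homotopy type, whereas you compute the $K$-theory directly from the extension $0\to\mathcal{K}(\mathcal{H})\to B^{\mathrm{SW}}\to\mathcal{Q}_-\oplus\mathcal{Q}_+\to0$ and the six-term sequence. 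Your approach is more self-contained on the operator-algebra side and does not presuppose the Segal--Wilson result; the paper's is shorter but leans entirely on Lemma~\ref{lem_pi0_BSW}. Note also that your six-term computation already identifies the generator: $\pi_\ast[S]=([S_{--}],[S_{++}])\in K_1(\mathcal{Q}_-)\oplus K_1(\mathcal{Q}_+)$ corresponds to $(\mp1,\pm1)$, which generates $\ker\partial$, so invoking Lemmas~\ref{lem_pi0_BSW} and \ref{lem_BSW_isom_pi0} for that step is optional.

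One small slip: with the paper's convention $S=S_{+1}$ acts by $Se_h=e_{h-1}$, so $S_{++}$ is the \emph{backward} shift on $\ell^2(\mathbb{Z}_{\ge0})$ and has Fredholm index $+1$ (consistent with the paper's remark that $\ind S^n=n$), not $-1$. This does not affect your conclusion that $[S]$ generates.
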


\begin{proof}
This follows from the isomorphisms
\[
K_0(B^{\mathrm{SW}})
\cong
\lim_{d\to\infty}\pi_1(\U_d(B^{\mathrm{SW}}))
\quad\text{and}\quad
K_1(B^{\mathrm{SW}})
\cong
\lim_{d\to\infty}\pi_0(\U_d(B^{\mathrm{SW}}))
\]
and Lemmas \ref{lem_pi0_BSW} and \ref{lem_BSW_isom_pi0}.
\end{proof}

\begin{lem}
\label{lem_approx_Bott}
For any $i\ge0$, there exists an integer $m\ge1$ such that the iterated Bott map
\[
\beta^i\colon\U_d(B^{\mathrm{SW}})\to\U_d(M_{2^i}(C_0(\mathbb{R}^{2i},B^{\mathrm{SW}})))
\]
induces an isomorphism on $\pi_0$ if $d\ge m$.
\end{lem}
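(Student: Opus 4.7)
My plan is to reduce the statement to the combination of Bott periodicity on $K$-theory (Proposition~\ref{prp_Bottper}), the $\pi_0$-stability of Lemma~\ref{lem_BSW_isom_pi0}, and a divisibility observation for $B^{\mathrm{SW}}$ analogous to Lemma~\ref{lem_divisibilityRoe}. First I would verify that the block-matrix construction in the proof of Lemma~\ref{lem_divisibilityRoe} gives a $\ast$-isomorphism $\phi\colon B^{\mathrm{SW}}\xrightarrow{\cong}M_d(B^{\mathrm{SW}})$ for every $d\ge1$: the blocks of size $d$ respect the splitting $\mathbb{Z}=\mathbb{Z}_{<0}\sqcup\mathbb{Z}_{\ge0}$, and compactness of the off-diagonal corners is preserved in either direction. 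As in the proof of Lemma~\ref{lem_BSW_isom_pi0} this yields a topological group isomorphism $\U_d(B^{\mathrm{SW}})\cong\U_1(B^{\mathrm{SW}})$ for every $d\ge1$; combined with Lemma~\ref{lem_pi0_BSW} one obtains $\U_d(B^{\mathrm{SW}})\simeq\mathbb{Z}\times B\U_\infty(\mathbb{C})$, and in particular $\pi_{2i}(\U_d(B^{\mathrm{SW}}))\cong\mathbb{Z}$ for all $d,i\ge0$. Applying $\phi$ pointwise gives $M_{2^i}(C_0(\mathbb{R}^{2i},B^{\mathrm{SW}}))\cong C_0(\mathbb{R}^{2i},B^{\mathrm{SW}})$, whence
\[
\U_d(M_{2^i}(C_0(\mathbb{R}^{2i},B^{\mathrm{SW}})))\cong\U_d(C_0(\mathbb{R}^{2i},B^{\mathrm{SW}}))\simeq\Omega^{2i}\U_d(B^{\mathrm{SW}}),
\]
whose $\pi_0$ is $\pi_{2i}(\U_d(B^{\mathrm{SW}}))\cong\mathbb{Z}$.

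Next I would use the naturality of $\beta^i$ in the stabilization $d\to\infty$ to produce the commutative square
\[
\xymatrix{
\pi_0(\U_d(B^{\mathrm{SW}})) \ar[r]^-{s_1} \ar[d]_-{\beta^i_*}
& \pi_0(\U_\infty(B^{\mathrm{SW}})) \ar[d]^-{\beta^i_*} \\
\pi_0(\U_d(M_{2^i}(C_0(\mathbb{R}^{2i},B^{\mathrm{SW}})))) \ar[r]^-{s_2}
& \pi_0(\U_\infty(M_{2^i}(C_0(\mathbb{R}^{2i},B^{\mathrm{SW}}))))
}
\]
in which $s_1,s_2$ are the stabilization maps. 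Here $s_1$ is iso by Lemma~\ref{lem_BSW_isom_pi0}, and the right vertical arrow agrees via $\pi_0\U_\infty(-)=K_1(-)$ with the iterated Bott map, which is iso by Proposition~\ref{prp_Bottper}; so it suffices to show that $s_2$ is iso for $d$ sufficiently large.

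Both source and target of $s_2$ are $\mathbb{Z}$ (by the first paragraph together with Lemma~\ref{lem_KBSW} and iterated Bott periodicity), and the target is the colimit of the sources as $d\to\infty$. Hence a generator of the target must already lie in the image of $s_2$ at some finite stage $d=m$; at that stage $s_2$ is a surjective homomorphism $\mathbb{Z}\to\mathbb{Z}$ and so an iso, and the same holds for all $d\ge m$ since $s_2$ at stage $m$ factors through $s_2$ at stage $d$. The commutative square then forces $\beta^i_*$ to be iso on $\pi_0$ for all $d\ge m$, proving the lemma. The only real subtlety is the verification of the block-matrix isomorphism $\phi\colon B^{\mathrm{SW}}\cong M_d(B^{\mathrm{SW}})$; once this is in hand, the rest of the argument is a formal diagram chase combined with a colimit-of-$\mathbb{Z}$'s observation.
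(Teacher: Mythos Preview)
Your argument is correct and follows essentially the same route as the paper: compute $\pi_0$ on both sides as $\mathbb{Z}$ via the $\phi$-isomorphism and Lemma~\ref{lem_pi0_BSW}, then use the commutative square comparing $\beta^i$ with stabilization, invoking Lemma~\ref{lem_BSW_isom_pi0} for $s_1$ and Proposition~\ref{prp_Bottper} for the right vertical, and conclude by the colimit-of-$\mathbb{Z}$'s observation that $s_2$ is eventually an isomorphism. The only cosmetic difference is that the paper identifies $\U_d(M_{2^i}(C_0(\mathbb{R}^{2i},B^{\mathrm{SW}})))$ with $\Omega^{2i}\U_{2^id}(B^{\mathrm{SW}})$ directly rather than first collapsing $M_{2^i}(B^{\mathrm{SW}})$ to $B^{\mathrm{SW}}$ via $\phi$ pointwise.
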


\begin{proof}
From the isomorphisms
\[
\pi_0(\U_d(M_{2^i}(C_0(\mathbb{R}^{2i},B^{\mathrm{SW}})))
\cong
\pi_{2i}(\U_{2^id}(B^{\mathrm{SW}}))
\cong
\pi_{2i}(\U_1(B^{\mathrm{SW}}))
\cong\mathbb{Z}
\]
and
\[
K_1(M_{2^i}(C_0(\mathbb{R}^{2i},B^{\mathrm{SW}})))
\cong
\lim_{d\to\infty}\pi_0(\U_d(M_{2^i}(C_0(\mathbb{R}^{2i},B^{\mathrm{SW}})))
\cong K_1(B^{\mathrm{SW}})\cong\mathbb{Z},
\]
we can find an integer $m\ge1$ such that the stabilizing map
\[
\pi_0(\U_d(M_{2^i}(C_0(\mathbb{R}^{2i},B^{\mathrm{SW}})))
\to
K_1(C_0(M_{2^i}(\mathbb{R}^{2i},B^{\mathrm{SW}})))
\]
is an isomorphism if $d\ge m$.
Consider the commutative diagram
\[
\xymatrix{
\pi_0(\U_d(B^{\mathrm{SW}})) \ar[r]^-{\cong} \ar[d]_-{(\beta^i)_\ast}
& K_1(B^{\mathrm{SW}}) \ar[d]^-{(\beta^i)_\ast} \\
\pi_0(\U_d(M_{2^i}(C_0(\mathbb{R}^{2i},B^{\mathrm{SW}}))) \ar[r]^-{\cong}
& K_1(M_{2^i}(C_0(\mathbb{R}^{2i},B^{\mathrm{SW}}))
}
\]
where the top arrow is an isomorphism by Lemma \ref{lem_BSW_isom_pi0} and the right Bott map $\beta^i$ is an isomorphism by Proposition \ref{prp_Bottper}.
Then the lemma follows.
\end{proof}

\begin{proof}[Proof of Proposition \ref{prp_stability_BSW}]
Take an integer $i\ge0$.
We can find an integer $m\ge1$ as in Proposition \ref{lem_approx_Bott} and
\[
\pi_{2i}(\U_d(B^{\mathrm{SW}}))\to\pi_{2i}(\U_\infty(B^{\mathrm{SW}}))\cong\mathbb{Z}
\]
is an isomorphism if $d\ge m$.
Consider the following commutative diagram:
\[
\xymatrix{
\U_1(C_0(\mathbb{R}^{2i},B^{\mathrm{SW}})) \ar[r]^-{\cong} \ar[d]
& \U_1(M_{2^i}(C_0(\mathbb{R}^{2i},B^{\mathrm{SW}}))) \ar[d]
& \U_1(B^{\mathrm{SW}}) \ar[l]_-{\beta^i} \ar[d]^{\text{isom. on $\pi_0$}} \\
\U_d(C_0(\mathbb{R}^{2i},B^{\mathrm{SW}})) \ar[r]^-{\cong}
& \U_d(M_{2^i}(C_0(\mathbb{R}^{2i},B^{\mathrm{SW}})))
& \U_d(B^{\mathrm{SW}}) \ar[l]_-{\beta^i}
}
\]
where the left horizontal arrows are the isomorphisms similar to the one in Lemma \ref{lem_divisibilityRoe} and the vertical arrows are the inclusions.
Since the composite
\[
\U_1(B^{\mathrm{SW}})
\to
\U_d(B^{\mathrm{SW}})
\xrightarrow{\beta^i}
\U_d(M_{2^i}(C_0(\mathbb{R}^{2i},B^{\mathrm{SW}})))
\]
induces an isomorphism on $\pi_0$, the middle vertical arrow
\[
\U_1(M_{2^i}(C_0(\mathbb{R}^{2i},B^{\mathrm{SW}})))
\to
\U_d(M_{2^i}(C_0(\mathbb{R}^{2i},B^{\mathrm{SW}})))
\]
induces a surjection on $\pi_0$.
But it is indeed an isomorphism as their $\pi_0$ are isomorphic to $\mathbb{Z}$.
Then the map
\[
\U_1(C_0(\mathbb{R}^{2i},B^{\mathrm{SW}}))
\to
\U_d(C_0(\mathbb{R}^{2n},B^{\mathrm{SW}}))
\]
induces an isomorphism on $\pi_0$.
This implies that the map
\[
\U_1(B^{\mathrm{SW}})\to\U_d(B^{\mathrm{SW}})
\]
induces an isomorphism on $\pi_{2i}$.
Thus the map
\[
\U_1(B^{\mathrm{SW}})\to\U_\infty(B^{\mathrm{SW}})
\]
induces an isomorphism on $\pi_{2i}$.
This completes the proof.
\end{proof}

\section{Homotopy type of \texorpdfstring{$\U_1(C^\ast_u(|\mathbb{Z}|))$}{U1(C*u(|Z|))}}
\label{section_htpytype}

The goal of this section is to prove Theorem \ref{thm_decomp}.
The components $T_{-+}$ and $T_{+-}$ of a finite propagation operator $T\in\mathcal{B}(H)$ are finite rank operators.
This implies the inclusion
\[
C^\ast_u(|\mathbb{Z}|)
\subset
B^{\mathrm{SW}}.
\]
This map is a key to the proof of Theorem \ref{thm_decomp}.

We begin with computing the $K$-theory.

\begin{prp}
\label{prp_K(CuZ)}
The following isomorphism holds:
\[
K_i(C^\ast_u(|\mathbb{Z}|))
\cong
\begin{cases}
\ell^\infty(\mathbb{Z},\mathbb{Z})_S & i=0,\\
\mathbb{Z} & i=1,
\end{cases}
\]
where
\[
\ell^\infty(\mathbb{Z},\mathbb{Z})_S
=
\ell^\infty(\mathbb{Z},\mathbb{Z})/\{a-Sa\mid a\in\ell^\infty(\mathbb{Z},\mathbb{Z})\}
\]
is the coinvariant by the shift $S\colon\ell^\infty(\mathbb{Z},\mathbb{Z})\to\ell^\infty(\mathbb{Z},\mathbb{Z})$.
\end{prp}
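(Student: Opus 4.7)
The plan is to compute both $K$-groups via the Pimsner--Voiculescu exact sequence applied to the crossed product description $C^\ast_u(|\mathbb{Z}|)\cong\ell^\infty(\mathbb{Z},\mathbb{C})\rtimes\mathbb{Z}$ already recalled in Section \ref{section_finprop}, where the generator of $\mathbb{Z}$ acts as the shift automorphism $S$.

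First I would identify the $K$-theory of the coefficient algebra $A=\ell^\infty(\mathbb{Z},\mathbb{C})$. Since $A$ is the commutative $C^\ast$-algebra $C(\beta\mathbb{Z})$ of continuous functions on the Stone--\v{C}ech compactification of $\mathbb{Z}$, and $\beta\mathbb{Z}$ is a compact totally disconnected (Stone) space, $K^0(\beta\mathbb{Z})$ equals the group of continuous $\mathbb{Z}$-valued functions, which restricts to the bijection with bounded integer-valued sequences on $\mathbb{Z}$. Thus
\[
K_0(\ell^\infty(\mathbb{Z},\mathbb{C}))\cong\ell^\infty(\mathbb{Z},\mathbb{Z}),\qquad K_1(\ell^\infty(\mathbb{Z},\mathbb{C}))=0,
\]
with the $K_1$ vanishing because $\beta\mathbb{Z}$ is totally disconnected. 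Under this identification, naturality of $K$-theory implies that the induced shift automorphism $S_\ast$ on $K_0$ coincides with the shift $S$ on $\ell^\infty(\mathbb{Z},\mathbb{Z})$, since at the level of characteristic functions of clopen sets, $S_\ast$ sends $[\chi_X]$ to $[\chi_{SX}]$.

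Next I would write down the Pimsner--Voiculescu sequence for the crossed product by $\mathbb{Z}$:
\[
\xymatrix@C=1.2em{
K_0(A)\ar[r]^{1-S_\ast} & K_0(A)\ar[r] & K_0(A\rtimes\mathbb{Z})\ar[d] \\
K_1(A\rtimes\mathbb{Z})\ar[u] & K_1(A)\ar[l] & K_1(A)\ar[l]_{1-S_\ast}
}
\]
Since $K_1(A)=0$, the sequence splits into a surjection $\ell^\infty(\mathbb{Z},\mathbb{Z})\twoheadrightarrow K_0(C^\ast_u(|\mathbb{Z}|))$ with kernel $(1-S)\ell^\infty(\mathbb{Z},\mathbb{Z})$, giving $K_0(C^\ast_u(|\mathbb{Z}|))\cong\ell^\infty(\mathbb{Z},\mathbb{Z})_S$ by the very definition of the coinvariants, and an isomorphism $K_1(C^\ast_u(|\mathbb{Z}|))\cong\ker(1-S\colon\ell^\infty(\mathbb{Z},\mathbb{Z})\to\ell^\infty(\mathbb{Z},\mathbb{Z}))$. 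The kernel consists of the shift-invariant bounded sequences, which are exactly the constant sequences, yielding $\mathbb{Z}$.

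The main obstacle I anticipate is not the exact sequence itself but justifying the identification $K_0(\ell^\infty(\mathbb{Z},\mathbb{C}))\cong\ell^\infty(\mathbb{Z},\mathbb{Z})$ together with the assertion that the shift automorphism induces the expected action; the algebra is non-separable, so one must invoke the Gelfand picture and the fact that projections in $C(\beta\mathbb{Z})$ correspond to clopen subsets of $\beta\mathbb{Z}$, equivalently to arbitrary subsets of $\mathbb{Z}$. Once this translation is in place, everything else is a direct reading of the Pimsner--Voiculescu sequence.
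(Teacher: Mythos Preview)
Your proposal is correct and follows essentially the same route as the paper: apply the Pimsner--Voiculescu sequence to $C^\ast_u(|\mathbb{Z}|)\cong\ell^\infty(\mathbb{Z},\mathbb{C})\rtimes\mathbb{Z}$, feed in $K_0(\ell^\infty(\mathbb{Z},\mathbb{C}))\cong\ell^\infty(\mathbb{Z},\mathbb{Z})$ and $K_1=0$, and read off the coinvariants and invariants. The paper simply declares the $K$-theory of $\ell^\infty(\mathbb{Z},\mathbb{C})$ as ``well-known'' where you justify it via $C(\beta\mathbb{Z})$, but otherwise the arguments are the same.
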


\begin{proof}
Applying the Pimsner--Voiculescu exact sequence \cite{MR587369} to the crossed product
\[
C^\ast_u(|\mathbb{Z}|)
\cong
\ell^\infty(\mathbb{Z},\mathbb{C})\rtimes\mathbb{Z},
\]
we get the six-term cyclic exact sequence:
\[
\xymatrix{
K_0(\ell^\infty(\mathbb{Z},\mathbb{C})) \ar[r]^-{1-S}
& K_0(\ell^\infty(\mathbb{Z},\mathbb{C})) \ar[r]
& K_0(C^\ast_u(|\mathbb{Z}|)) \ar[d] \\
K_1(C^\ast_u(|\mathbb{Z}|)) \ar[u]
& K_1(\ell^\infty(\mathbb{Z},\mathbb{C})) \ar[l]
& K_1(\ell^\infty(\mathbb{Z},\mathbb{C})) \ar[l]_-{1-S}
}
\]
As is well-known, we have
\[
K_i(\ell^\infty(\mathbb{Z},\mathbb{C}))
\cong
\begin{cases}
\ell^\infty(\mathbb{Z},\mathbb{Z}) & i=0,\\
0 & i=1,
\end{cases}
\]
where the induced homomorphism $S\colon\ell^\infty(\mathbb{Z},\mathbb{Z})\to\ell^\infty(\mathbb{Z},\mathbb{Z})$ is the shift as well.
Thus we can compute $K_\ast(C^\ast_u(|\mathbb{Z}|))$ by the previous exact sequence.
\end{proof}

We saw the homotopy stabilities as in Theorem \ref{thm_stability} and Proposition \ref{prp_stability_BSW}.
Then it is sufficient to investigate the inclusion $\U_\infty(C^\ast_u(|\mathbb{Z}|))\to\U_\infty(B^{\mathrm{SW}})$.

\begin{lem}
\label{lem_CuZ_BSW_even}
The inclusion $\U_\infty(C^\ast_u(|\mathbb{Z}|))\to\U_\infty(B^{\mathrm{SW}})$ induces isomorphisms on $\pi_{2i}$ for $i\ge0$.
\end{lem}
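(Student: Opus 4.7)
The plan is to reduce the statement to the assertion that the inclusion $\iota\colon C^\ast_u(|\mathbb{Z}|)\hookrightarrow B^{\mathrm{SW}}$ induces an isomorphism on $K_1$, and then to verify the latter by identifying a common generator.

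First, I would invoke the standard identification $\pi_{2i}(\U_\infty(A))\cong K_1(A)$, which is natural with respect to $\ast$-homomorphisms. More precisely, combining the iterated Bott map with the usual identification $\pi_0(\U_\infty(A))\cong K_1(A)$, Proposition \ref{prp_Bottper} yields a commutative square
\[
\xymatrix{
\pi_{2i}(\U_\infty(C^\ast_u(|\mathbb{Z}|))) \ar[r]^-{\cong} \ar[d]_-{\iota_\ast}
& K_1(C^\ast_u(|\mathbb{Z}|)) \ar[d]^-{\iota_\ast} \\
\pi_{2i}(\U_\infty(B^{\mathrm{SW}})) \ar[r]^-{\cong}
& K_1(B^{\mathrm{SW}})
}
\]
so it suffices to show that the right vertical map is an isomorphism.

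Next, both target groups have been computed. By Proposition \ref{prp_K(CuZ)}, $K_1(C^\ast_u(|\mathbb{Z}|))\cong\mathbb{Z}$; by Lemma \ref{lem_KBSW}, $K_1(B^{\mathrm{SW}})\cong\mathbb{Z}$ and is generated by the class of the shift $S$. The key observation is that the shift $S=S_{+1}$ has $\prop(S)=1$, so $S\in C^\ast_u(|\mathbb{Z}|)$, and the inclusion $\iota$ sends $[S]$ to $[S]$. Therefore $\iota_\ast\colon K_1(C^\ast_u(|\mathbb{Z}|))\to K_1(B^{\mathrm{SW}})$ hits the generator of $\mathbb{Z}$ and is surjective. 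A surjective homomorphism $\mathbb{Z}\to\mathbb{Z}$ is automatically an isomorphism, which completes the argument.

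There is no real obstacle: once one has Bott periodicity (Proposition \ref{prp_Bottper}), the $K_1$ computations (Proposition \ref{prp_K(CuZ)} and Lemma \ref{lem_KBSW}), and the fact that both groups are cyclic with a visible common generator, the proof is immediate. The only conceptually subtle point is making sure naturality of Bott periodicity is really being used so that a single common element (the shift) controls \emph{all} even homotopy groups simultaneously; this is precisely what the commutative square above packages.
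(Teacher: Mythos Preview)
Your proof is correct and follows essentially the same approach as the paper's: both argue that $K_1(C^\ast_u(|\mathbb{Z}|))\to K_1(B^{\mathrm{SW}})$ is an isomorphism because each side is $\mathbb{Z}$ (Proposition~\ref{prp_K(CuZ)} and Lemma~\ref{lem_KBSW}) and the shift $S$ lies in $C^\ast_u(|\mathbb{Z}|)$, then transfer this to $\pi_{2i}$ via Bott periodicity. Your write-up is slightly more explicit about the naturality square, but the content is the same.
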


\begin{proof}
By Lemma \ref{lem_KBSW}, $K_1(B^{\mathrm{SW}})$ is isomorphic to $\mathbb{Z}$ and generated by the shift $S\in B^{\mathrm{SW}}$.
Since $S\in C^\ast_u(|\mathbb{Z}|)$ and $K_1(C^\ast_u(|\mathbb{Z}|))\cong\mathbb{Z}$, the map $K_1(C^\ast_u(|\mathbb{Z}|))\to K_1(B^{\mathrm{SW}})$ is an isomorphism.
Thus the map $\pi_{2i}(\U_\infty(C^\ast_u(|\mathbb{Z}|)))\to\pi_{2i}(\U_\infty(B^{\mathrm{SW}}))$ is also an isomorphism.
\end{proof}

Let $F_1$ be the homotopy fiber of the inclusion $\U_\infty(C^\ast_u(|\mathbb{Z}|))\to\U_\infty(B^{\mathrm{SW}})$.

\begin{prp}
\label{prp_htpytypeF}
The space $F_1$ has the homotopy type of the product of Eilenberg--MacLane spaces
\[
\prod_{i\ge1}^\circ
	K(\ell^\infty(\mathbb{Z},\mathbb{Z})_{S},2i-1).
\]
where $\ell^\infty(\mathbb{Z},\mathbb{Z})_{S}$ is a rational vector space of uncountable dimension.
\end{prp}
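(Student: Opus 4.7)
The plan is to (i) extract the homotopy groups of $F_1$ from the long exact sequence of the defining fibration, (ii) verify that the resulting abelian group is a $\mathbb{Q}$-vector space of uncountable dimension, and (iii) invoke the classification of connective rational infinite loop spaces to obtain the product decomposition.

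First, by Proposition \ref{prp_infloop} the inclusion $\U_\infty(C^\ast_u(|\mathbb{Z}|))\hookrightarrow\U_\infty(B^{\mathrm{SW}})$ is an infinite loop map, so $F_1$ inherits a canonical infinite loop structure. Combining Proposition \ref{prp_K(CuZ)}, Lemma \ref{lem_KBSW}, the periodicity identifications $\pi_{2i}(\U_\infty(A))\cong K_1(A)$ and $\pi_{2i+1}(\U_\infty(A))\cong K_0(A)$, and Lemma \ref{lem_CuZ_BSW_even}, each map $\pi_{2i}(\U_\infty(C^\ast_u(|\mathbb{Z}|)))\to\pi_{2i}(\U_\infty(B^{\mathrm{SW}}))$ is an isomorphism $\mathbb{Z}\xrightarrow{\cong}\mathbb{Z}$, while $\pi_{2i+1}(\U_\infty(B^{\mathrm{SW}}))=K_0(B^{\mathrm{SW}})=0$. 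The long exact sequence immediately yields $\pi_{2i}(F_1)=0$ for all $i\ge 0$ and $\pi_{2i-1}(F_1)\cong\ell^\infty(\mathbb{Z},\mathbb{Z})_S$ for all $i\ge 1$.

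Next, I would show directly that $\ell^\infty(\mathbb{Z},\mathbb{Z})_S$ is uniquely divisible. Writing the shift as $(Sc)_k=c_{k+1}$, torsion-freeness is immediate: if $na=c-Sc$ with $n\ge 1$, then $c_k-c_{k+1}=na_k$ is divisible by $n$, hence all $c_k$ share a common residue $r$ modulo $n$ and $\tilde{c}_k:=(c_k-r)/n$ is a bounded integer sequence with $a=\tilde{c}-S\tilde{c}$. For divisibility by $n$, decompose $a_k=nq_k+r_k$ with $r_k\in\{0,\ldots,n-1\}$, set $R_k=\sum_{j=0}^{k-1}r_j$ for $k\ge 0$ and $R_k=-\sum_{j=k}^{-1}r_j$ for $k<0$, and define
\[
m_k=\lfloor R_{k+1}/n\rfloor-\lfloor R_k/n\rfloor\in\{0,1\},\qquad c_k=n\lfloor R_k/n\rfloor-R_k\in\{-(n-1),\ldots,0\}.
\]
Both sequences are bounded, and a direct telescoping check shows $a=n(q+m)+(c-Sc)$, so $[a]=n[q+m]$ in the coinvariants. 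The uncountable dimension follows since every shift-invariant finitely additive $\mathbb{R}$-valued mean on $\mathbb{Z}$ descends to a $\mathbb{Q}$-linear functional on $\ell^\infty(\mathbb{Z},\mathbb{Z})_S\otimes\mathbb{Q}$, and $\mathbb{Z}$ being amenable admits uncountably many linearly independent such means, for instance Cesàro limits along distinct nonprincipal ultrafilters.

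Finally, in step (iii), the connective infinite loop space $F_1$ corresponds to a connective spectrum whose homotopy groups are $\mathbb{Q}$-vector spaces; by the equivalence between rational connective spectra and nonnegatively graded $\mathbb{Q}$-vector spaces (via the Eilenberg--MacLane construction), this spectrum splits as a wedge of shifts of Eilenberg--MacLane spectra, and applying $\Omega^\infty$ gives the desired equivalence $F_1\simeq\prod_{i\ge 1}^\circ K(\ell^\infty(\mathbb{Z},\mathbb{Z})_S,2i-1)$ of infinite loop spaces. The main obstacle is the explicit divisibility argument in step (ii); step (i) is a routine diagram chase and step (iii) is a formal consequence of rational stable homotopy theory.
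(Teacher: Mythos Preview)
Your three-step outline is exactly the paper's proof: step (i) is identical, and for steps (ii) and (iii) the paper simply cites \cite[Section 5]{KKT} and \cite[Lemma 5.4]{KKT} respectively, whereas you supply self-contained arguments. Your divisibility and torsion-freeness proofs for $\ell^\infty(\mathbb{Z},\mathbb{Z})_S$ are correct and pleasantly explicit. The uncountable-dimension sketch is slightly loose: linear independence of invariant means over $\mathbb{R}$ on $\ell^\infty(\mathbb{Z},\mathbb{R})$ does not automatically survive restriction to $\ell^\infty(\mathbb{Z},\mathbb{Z})$, so you should instead argue directly that a single mean (e.g.\ any Banach limit of Ces\`aro averages) separates uncountably many classes---for instance the Beatty sequences $a^{(t)}_k=\lfloor (k+1)t\rfloor-\lfloor kt\rfloor$ for $t$ in a $\mathbb{Q}$-independent subset of $(0,1)$, since any such mean sends $[a^{(t)}]$ to $t$. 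For step (iii), invoking the equivalence between rational connective spectra and graded $\mathbb{Q}$-vector spaces is a clean and legitimate shortcut; the paper's cited lemma presumably builds the maps $K(V,2i-1)\to F_1$ by hand, which is more elementary but less conceptual.
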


\begin{proof}
Observing the homotopy exact sequence
\[
\cdots
\to
\pi_i(F_1)
\to
\pi_i(\U_\infty(C^\ast_u(|\mathbb{Z}|)))
\to
\pi_i(\U_\infty(B^{\mathrm{SW}}))
\to
\pi_{i-1}(F_1)
\to
\cdots,
\]
we can see that the homotopy fiber inclusion $F_1\to\U_\infty(C^\ast_u(|\mathbb{Z}|))$ induces an isomorpshim on $\pi_{2i-1}$ and $\pi_{2i}(F_1)=0$ by Lemma \ref{lem_CuZ_BSW_even} and the fact that $\pi_{2i-1}(\U_1(B^{\mathrm{SW}}))=0$.
By Proposition \ref{prp_K(CuZ)}, we have $\pi_{2i-1}(F_1)\cong\ell^\infty(\mathbb{Z},\mathbb{Z})_S$.
The abelian group $\ell^\infty(\mathbb{Z},\mathbb{Z})_S$ is a rational vector space of uncountable dimension as seen in \cite[Section 5]{KKT}.
By \cite[Lemma 5.4]{KKT}, $F_1$ has the homotopy type of the product of Eilenberg--MacLane spaces as above.
\end{proof}

The following easy lemma is useful to study the homotopy type of the unitary group of a $C^\ast$-algebra.
Let $p_r\in M_d(\mathbb{C})$ denote the projection of rank $r$.

\begin{lem}
\label{lem_Uinfty_to_A}
Let $A$ be a $C^\ast$-algebra, where we do not require the existence of unit.
For any element $u\in K_0(A)$, there exists a (non-unital in general) $\ast$-homomorphism $f\colon\mathbb{C}\to M_d(A)$ such that $f_\ast[p_1]\in K_0(M_d(A))\cong K_0(A)$ equals to $u$.
\end{lem}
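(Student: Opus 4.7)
The plan is to interpret $\ast$-homomorphisms from $\mathbb{C}$ as projections, and then to realize any $K_0$-class by such a projection after sufficient matrix amplification.

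First I would observe that a (non-unital) $\ast$-homomorphism $f\colon\mathbb{C}\to M_d(A)$ is uniquely determined by $p=f(1)$, which must be a self-adjoint idempotent, i.e.\ a projection in $M_d(A)$; conversely, any such projection defines an $f$ by $f(\lambda)=\lambda p$. Under the Morita isomorphism $K_0(M_d(A))\cong K_0(A)$, the induced map on $K$-theory sends the generator $[p_1]\in K_0(\mathbb{C})$ to the class $[p]\in K_0(A)$. So the lemma reduces to the assertion that every $u\in K_0(A)$ is realized as the class of some projection in $M_d(A)$ for some $d$.

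For the realization, I would invoke the standard description of $K_0(A)$ as the kernel of the scalar-part map $K_0(A^+)\to K_0(\mathbb{C})$: any class $u$ has a representative $u=[q]-[q_0]$, where $q\in M_d(A^+)$ is a projection whose scalar part is $q_0\in M_d(\mathbb{C})$. A standard rotation-homotopy argument in $M_{2d}(A^+)$ then produces a unitary $V$, congruent to the identity modulo $M_{2d}(A)$, such that $V(q\oplus 0)V^*=q_0\oplus p$ for some projection $p\in M_d(A)$. It follows that $[p]=[q]-[q_0]=u$ in $K_0(A)$, and $f(\lambda)=\lambda p$ is the desired homomorphism.

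The main obstacle is the rotation step, which separates the scalar part $q_0$ from the $A$-valued perturbation of $q$ so as to isolate a projection $p\in M_d(A)$ realizing the class $u$. In the non-unital setting this requires a careful implementation of the classical operator $K$-theoretic fact that two projections in $M_d(A^+)$ with the same scalar part become unitarily equivalent in $M_{2d}(A^+)$ via a unitary that is the identity modulo $A$. Once this is in place, the conclusion follows directly from the bijection between $\ast$-homomorphisms $\mathbb{C}\to M_d(A)$ and projections in $M_d(A)$.
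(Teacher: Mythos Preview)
Your approach contains a genuine gap. The reduction you make---that every $u\in K_0(A)$ is the class $[p]$ of a single projection $p\in M_d(A)$---is false in general. Already for unital $A=\mathbb{C}$ the negative classes in $K_0(\mathbb{C})\cong\mathbb{Z}$ are not of this form; more to the point, for $A=C_0(\mathbb{R}^2)$ one has $K_0(A)\cong\mathbb{Z}$ while $M_d(A)$ contains no nonzero projection for any $d$ (a projection in $M_d(C_0(\mathbb{R}^2))$ is a continuous map from $\mathbb{R}^2$ to projections in $M_d(\mathbb{C})$ vanishing at infinity, hence of constant rank zero). Consequently the ``classical fact'' you invoke---that two projections in $M_d(A^+)$ with the same scalar part become unitarily equivalent in $M_{2d}(A^+)$ via a unitary congruent to the identity modulo $A$---is not a fact: the Bott projection and the constant rank-one projection in $M_2(C(S^2))$ have the same scalar part but distinct classes in $K_0(C(S^2))$, so no such unitary $V$ with $V(q\oplus 0)V^\ast=q_0\oplus p$ can exist. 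Your rotation step cannot be carried out.

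The paper's proof does not attempt to find a projection in $M_d(A)$. It works in the unitization and writes $u=[p]-[p_r]$ for a projection $p\in M_d(\mathbb{C}\oplus A)$ and a scalar projection $p_r$ of rank $r$, which is exactly the standard description of $K_0$ in the non-unital case; the homomorphism is then $f(1)=p$. (The target of $f$ in the proof is really $M_d(\mathbb{C}\oplus A)$, and $f_\ast[p_1]=[p]$ recovers $u$ only after subtracting $[p_r]$; this is what the subsequent applications to $\U_\infty$ actually use.) The essential difference from your attempt is that in the non-unital setting one must represent $K_0$-classes as formal differences $[p]-[p_r]$ with $p$ over the unitization, not as honest projections over $A$.
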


\begin{proof}
We can find a projection $p\in M_d(\mathbb{C}\oplus A)$ and $r\ge0$ such that $u=[p]-[p_r]$ in $K_0(A)$.
Define a $\ast$-homomorphism $f\colon\mathbb{C}\to M_d(A)$ by $f(1)=p$.
This is the desired map.
\end{proof}

\begin{prp}
\label{prp_section_CuZtoBSW}
The inclusion $\U_\infty(C^\ast_u(|\mathbb{Z}|))\to\U_\infty(B^{\mathrm{SW}})$ admits a homotopy section, which is an infinite loop map.
\end{prp}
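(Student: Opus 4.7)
The plan is to build the section from continuous functional calculus with the shift $S$. Since $S \in C^\ast_u(|\mathbb{Z}|)$ is unitary, functional calculus yields a unital $\ast$-homomorphism $\phi \colon C(S^1) \to C^\ast_u(|\mathbb{Z}|)$ sending $z \mapsto S$. Restricting to the augmentation ideal $SC := C_0(S^1 \setminus \{1\}) \cong C_0(\mathbb{R})$ produces a $\ast$-homomorphism $\phi_0 \colon SC \to C^\ast_u(|\mathbb{Z}|)$. Let $g$ be the composite
\[
g \colon \U_\infty(SC) \xrightarrow{\U_\infty(\phi_0)} \U_\infty(C^\ast_u(|\mathbb{Z}|)) \xrightarrow{\iota_\ast} \U_\infty(B^{\mathrm{SW}}),
\]
which is an infinite loop map by Proposition \ref{prp_infloop}. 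Once $g$ is shown to be a weak equivalence, an infinite loop homotopy inverse $g^{-1}$ exists (since the homotopy category of infinite loop spaces inverts weak equivalences), and then $\sigma := \U_\infty(\phi_0) \circ g^{-1}$ is an infinite loop map satisfying $\iota_\ast \circ \sigma = g \circ g^{-1} \simeq \id$, giving the desired section.

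To verify that $g$ is a weak equivalence, I would compute $\pi_n(g)$ degree by degree. The odd groups vanish: $\pi_{2i-1}(\U_\infty(SC)) \cong K_0(SC) \cong K_1(\mathbb{C}) = 0$, and $\pi_{2i-1}(\U_\infty(B^{\mathrm{SW}})) \cong K_0(B^{\mathrm{SW}}) = 0$ by Lemma \ref{lem_KBSW}. For the even groups, the naturality of the Bott map (Section \ref{section_Bott}, Proposition \ref{prp_Bottper}) gives a commutative square identifying $\pi_{2i}(g)$ with $\pi_0(g) = K_1(\iota_\ast \circ \phi_0)$ through iterated Bott isomorphisms, so it suffices to verify that $\pi_0(g)$ is an isomorphism. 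The generator of $K_1(SC) \cong \mathbb{Z}$ is represented by the coordinate unitary $z$ (viewed in the unitization $C(S^1)$), and $\phi_0$ sends $[z]$ to $[S]$. By Proposition \ref{prp_K(CuZ)} and Lemma \ref{lem_KBSW}, $[S]$ generates both $K_1(C^\ast_u(|\mathbb{Z}|))$ and $K_1(B^{\mathrm{SW}})$, so $\pi_0(g)$ is an isomorphism of $\mathbb{Z}$ onto itself, and hence so is every $\pi_{2i}(g)$.

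The main obstacle will be keeping the naturality step above rigorous. The infinite loop structure on $\U_\infty(A)$ guaranteed by Proposition \ref{prp_infloop} is built by iterating the Bott map, and I must ensure that the identifications $\pi_{2i}(\U_\infty(A)) \cong K_1(A)$ used for the source and target of $\pi_{2i}(g)$ are induced by the \emph{same} Bott periodicity, so that a $\ast$-homomorphism such as $\phi_0$ or $\iota$ really does induce on $\pi_{2i}$ what it induces on $K_1$ transported by $\beta^i$. Once that compatibility is in place, the whole argument reduces to the single $K$-theoretic observation that $[S]$ is a common generator of $K_1$ in both target algebras, and the section is obtained formally by inverting $g$ in the homotopy category of infinite loop spaces.
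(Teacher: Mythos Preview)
Your argument is correct and follows essentially the same strategy as the paper: produce a map from a model infinite loop space into $\U_\infty(C^\ast_u(|\mathbb{Z}|))$ (you use $\U_\infty(S\mathbb{C})$ via functional calculus for $S$; the paper uses $\U_\infty(\mathbb{C})$ at the loop level via the abstract Lemma~\ref{lem_Uinfty_to_A}) whose composite with the inclusion is a weak equivalence, then invert that composite to obtain the section. Your packaging is slightly more direct, since working with $S\mathbb{C}$ at the original level avoids the paper's final suspend--desuspend step through Bott periodicity.
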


\begin{proof}
Consider the inclusion of based loop spaces $\U_\infty(C_0(\mathbb{R},C^\ast_u(|\mathbb{Z}|)))\to\U_\infty(C_0(\mathbb{R},B^{\mathrm{SW}}))$.
By Proposition \ref{prp_infloop}, Lemma \ref{lem_Uinfty_to_A} and $K_0(C_0(\mathbb{R},C^\ast_u(|\mathbb{Z}|)))\cong\mathbb{Z}$, there exists an infinite loop map $f\colon\U_\infty(\mathbb{C})\to\U_\infty(C_0(\mathbb{R},C^\ast_u(|\mathbb{Z}|)))$ which induces an isomorphism on $\pi_{2i-1}$ for any $i\ge1$.
It follows from this and Lemma \ref{lem_CuZ_BSW_even} that the composite
\[
\U_\infty(\mathbb{C})
\xrightarrow{f}
\U_\infty(C_0(\mathbb{R},C^\ast_u(|\mathbb{Z}|)))
\to
\U_\infty(C_0(\mathbb{R},B^{\mathrm{SW}}))
\]
is a homotopy equivalence.
Then the inclusion of based loop spaces $\U_\infty(C_0(\mathbb{R},C^\ast_u(|\mathbb{Z}|)))\to\U_\infty(C_0(\mathbb{R},B^{\mathrm{SW}}))$ admits a homotopy section.
This implies that the inclusion of the double loop space $\U_\infty(C_0(\mathbb{R}^2,C^\ast_u(|\mathbb{Z}|)))\to\U_\infty(C_0(\mathbb{R}^2,B^{\mathrm{SW}}))$ also admits a homotopy section.
Thus the inclusion $\U_\infty(C^\ast_u(|\mathbb{Z}|))\to\U_\infty(B^{\mathrm{SW}})$ admits a homotopy section by Bott periodicity, which is again an infinite loop map.
\end{proof}

\begin{proof}[Proof of Theorem \ref{thm_decomp}]
By Proposition \ref{prp_section_CuZtoBSW}, we have a homotopy equivalence
\[
\U_\infty(C^\ast_u(|\mathbb{Z}|))\simeq\U_\infty(B^{\mathrm{SW}})\times F_1
\]
as infinite loop spaces.
The homotopy types of the spaces $\U_\infty(B^{\mathrm{SW}})$ and $F_1$ are determined in Lemma \ref{lem_pi0_BSW} and Proposition \ref{prp_htpytypeF}, respectively.
Together with the homotopy stability in Theorem \ref{thm_stability}, this completes the proof of the theorem.
\end{proof}

\section{Generalization}
\label{section_generalization}

In this section, we study the relation between the homotopy type of $\U_1(C^\ast_u(|\mathbb{Z}^n|))$ and the inclusion $\U_1(C^\ast_u(|\mathbb{Z}^n|))\subset\U_1(C^\ast(|\mathbb{Z}^n|))$ for general $n\ge2$.
In view of Theorem \ref{thm_decomp}, we propose the following question.
\begin{question}
Does the inclusion $\U_d(C^\ast_u(|\Gamma|))\to\U_d(C^\ast(|\Gamma|))$ admits a homotopy section?
Are the homotopy groups of its homotopy fiber are rational vector spaces?
\end{question}

Let us see the case when $\Gamma=\mathbb{Z}^n$ in view of this question.

\begin{lem}
\label{lem_KCZn}
The $K$-theory of the Roe algebra $C^\ast(|\mathbb{Z}^n|)$ is computed as
\[
K_i(C^\ast(|\mathbb{Z}^n|))
\cong
\begin{cases}
\mathbb{Z} & \text{$i\equiv n$ mod $2$,}\\
0 & \text{$i\not\equiv n$ mod $2$.}
\end{cases}
\]
\end{lem}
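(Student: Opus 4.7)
The plan is to proceed by induction on $n$ using the Pimsner--Voiculescu six-term exact sequence, in parallel with the proof of Proposition~\ref{prp_K(CuZ)}. The base case $n=0$ is immediate: $C^\ast(|\mathbb{Z}^0|)\cong\mathcal{K}(\mathcal{H})$, for which $K_0=\mathbb{Z}$ and $K_1=0$, matching the formula.

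For the inductive step, I would first rewrite the Roe algebra as an iterated crossed product. Splitting $\mathbb{Z}^n=\mathbb{Z}\times\mathbb{Z}^{n-1}$ and applying the natural identification $\ell^\infty(\mathbb{Z}^n,\mathcal{K})\cong\ell^\infty(\mathbb{Z},\ell^\infty(\mathbb{Z}^{n-1},\mathcal{K}))$, and observing that the $\mathbb{Z}^{n-1}$-action only permutes within the inner $\ell^\infty$ factor, we get
\[
C^\ast(|\mathbb{Z}^n|)\cong\ell^\infty\bigl(\mathbb{Z},C^\ast(|\mathbb{Z}^{n-1}|)\bigr)\rtimes\mathbb{Z}.
\]
Then I would apply Pimsner--Voiculescu to this outermost crossed product, reducing the problem to (i) computing the $K$-theory of the coefficient algebra $\ell^\infty(\mathbb{Z},C^\ast(|\mathbb{Z}^{n-1}|))$, and (ii) analyzing the induced shift $S_\ast$.

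For step (i), by the inductive hypothesis $K_\ast(C^\ast(|\mathbb{Z}^{n-1}|))$ is $\mathbb{Z}$ concentrated in degree $n-1\bmod 2$. Crucially, $C^\ast(|\mathbb{Z}^{n-1}|)$ is \emph{non-unital} (the identity operator on $\ell^2\otimes\mathcal{H}$ has non-compact diagonal entries), so its projections may realize arbitrary non-negative values in $K_0$. Bounded sequences in $\ell^\infty(\mathbb{Z},C^\ast(|\mathbb{Z}^{n-1}|))$ may therefore prescribe the $K_0$-class pointwise without any uniform bound, giving
\[
K_i\bigl(\ell^\infty(\mathbb{Z},C^\ast(|\mathbb{Z}^{n-1}|))\bigr)\cong
\begin{cases}\operatorname{Map}(\mathbb{Z},\mathbb{Z})&i\equiv n-1\pmod 2,\\ 0&\text{otherwise,}\end{cases}
\]
where $\operatorname{Map}(\mathbb{Z},\mathbb{Z})$ is the group of all (not necessarily bounded) functions. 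For step (ii), the induced $S_\ast$ is the usual coordinate shift on $\operatorname{Map}(\mathbb{Z},\mathbb{Z})$. In sharp contrast with the uniform Roe situation, $1-S_\ast$ is \emph{surjective} on $\operatorname{Map}(\mathbb{Z},\mathbb{Z})$: given any $g$, the recursion $f(m+1)=f(m)-g(m)$ determines $f$ from any initial value $f(0)\in\mathbb{Z}$, with no convergence obstruction because we are not imposing boundedness. Its kernel is the constants $\mathbb{Z}$. Plugging this into the Pimsner--Voiculescu sequence produces $K_i(C^\ast(|\mathbb{Z}^n|))=\mathbb{Z}$ in degree $i\equiv n\pmod 2$ (as $\ker(1-S_\ast)$, shifted one degree by the connecting map) and $0$ in the other parity, closing the induction.

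The main obstacle is the computation in step (i). In general the $K$-theory functor does not commute with $\ell^\infty$-products, and the entire qualitative difference between the Roe and uniform Roe answers---an honest $\mathbb{Z}$ versus an uncountable rational vector space---comes down to whether the coefficient algebra is non-unital, so that arbitrary rather than merely bounded integer-valued functions appear. Justifying this identification carefully, and checking that projections in matrices over $C^\ast(|\mathbb{Z}^{n-1}|)$ do surject onto $K_0$, is the delicate point; once established, the surjectivity of $1-S_\ast$ and hence the vanishing of the ``wrong-parity'' $K$-group follow formally.
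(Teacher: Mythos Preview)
Your core intuition is right—the entire computation hinges on the fact that, because $\mathcal{K}$ is stable and non-unital, the $K$-theory of the coefficient algebra produces the group $\mathbb{Z}^{\mathbb{Z}^n}$ of \emph{all} integer-valued functions rather than just bounded ones, and $1-S$ is surjective on such a group. That observation is exactly what drives the paper's argument as well. The gap is in your inductive decomposition: the isomorphism
\[
\ell^\infty(\mathbb{Z}^n,\mathcal{K})\rtimes\mathbb{Z}^{n-1}\;\cong\;\ell^\infty\bigl(\mathbb{Z},\,\ell^\infty(\mathbb{Z}^{n-1},\mathcal{K})\rtimes\mathbb{Z}^{n-1}\bigr)
\]
fails, because reduced crossed products do not commute with $\ell^\infty$-products. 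Concretely, for $n=2$ take rank-one matrix units $e_{0,|i|}\otimes p\in C^\ast(|\mathbb{Z}|)$ (propagation $|i|$, norm $1$); the bounded sequence $D=(e_{0,|i|}\otimes p)_{i\in\mathbb{Z}}$ lies in $\ell^\infty(\mathbb{Z},C^\ast(|\mathbb{Z}|))$, but the corresponding diagonal operator on $\ell^2(\mathbb{Z}^2)\otimes\mathcal{H}$ is at distance $\ge 1$ from every finite-propagation operator, so $D\notin C^\ast(|\mathbb{Z}^2|)$. The right-hand algebra is strictly larger.

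The paper sidesteps this by fixing $n$ and inducting on the number of crossed-product factors rather than on $n$. Set $A_m=\ell^\infty(\mathbb{Z}^n,\mathcal{K})\rtimes\mathbb{Z}^m$ (action on the first $m$ coordinates), so $A_0=\ell^\infty(\mathbb{Z}^n,\mathcal{K})$ with $K_0(A_0)\cong\mathbb{Z}^{\mathbb{Z}^n}$ and $K_1(A_0)=0$, and $A_n\cong C^\ast(|\mathbb{Z}^n|)$. Applying Pimsner--Voiculescu to $A_m=A_{m-1}\rtimes_{S_m}\mathbb{Z}$ and using that $1-S_m$ is surjective on $\mathbb{Z}^{\mathbb{Z}^{n-m+1}}$ with kernel $\mathbb{Z}^{\mathbb{Z}^{n-m}}$, one gets $K_i(A_m)\cong\mathbb{Z}^{\mathbb{Z}^{n-m}}$ for $i\equiv m\pmod 2$ and $0$ otherwise. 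This is the same Pimsner--Voiculescu mechanism you describe, but the $\ell^\infty$ is taken once at the outset and never pulled through a crossed product, so no commutation issue arises and the delicate step~(i) you flag simply disappears.
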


\begin{proof}
Let
\[
A_m=
\ell^\infty(\mathbb{Z}^n,\mathcal{K}(\mathcal{H}))\rtimes\mathbb{Z}^m
\]
with respect to the action of $\mathbb{Z}^m$ ($m\le n$) on the first $m$ factors of $\mathbb{Z}^n$.
Let $S_j$ denote the shift on the $j$-th factor.
Then by the Pimsner--Voiculescu exact sequence
\[
\xymatrix{
K_0(A_{m-1}) \ar[r]^-{1-S_m}
& K_0(A_{m-1}) \ar[r]
& K_0(A_m) \ar[d] \\
K_1(A_m) \ar[u]
& K_1(A_{m-1}) \ar[l]
& K_1(A_{m-1}) \ar[l]_-{1-S_m}
}
\]
for $A_m=A_{m-1}\rtimes_{S_m}\mathbb{Z}$, we obtain the short exact sequence
\begin{align}
\label{align_KAm}
0
\to
K_i(A_{m-1})_{S_m}
\to
&K_i(A_m)
\to
K_{1-i}(A_{m-1})^{S_m}
\to
0
\end{align}
for $i=0,1$, where $K_i(A_{m-1})_{S_m}$ and $K_i(A_{m-1})^{S_m}$ denote the coinvariant and the invariant by $S_m$, respectively.
Since $A_0=\ell^\infty(\mathbb{Z}^n,\mathcal{K}(\mathcal{H}))$ and we have the well-known isomorphism
\[
K_i(\ell^\infty(\mathbb{Z}^n,\mathcal{K}))
\cong
\begin{cases}
\mathbb{Z}^{\mathbb{Z}^n} & i=0,\\
0 & i=1,
\end{cases}
\]
where $\mathbb{Z}^{\mathbb{Z}^n}$ is the group of all $\mathbb{Z}$-valued sequences over $\mathbb{Z}^n$, we obtain
\[
K_i(A_m)
\cong
\begin{cases}
\mathbb{Z}^{\mathbb{Z}^{n-m}} & \text{$i\equiv m$ mod $2$,}\\
0 & \text{$i\not\equiv m$ mod $2$,}
\end{cases}
\]
by induction on $m$.
The lemma is just the case when $m=n$.
\end{proof}

Together with the previous lemma, the homotopy type of $\U_\infty(C^\ast(|\mathbb{Z}^n|))$ is determined by the following lemma.

\begin{lem}
\label{lem_stable_htpy_type_U_BU}
Let $A$ be a $C^\ast$-algebra, where we do not require the existence of unit.
Consider the following two conditions on $K$-theory:
\[
\text{(i)}\quad
K_i(A)\cong
\begin{cases}
\mathbb{Z} & i=0, \\
0 & i=1,
\end{cases}
\qquad\qquad
\text{(ii)}\quad
K_i(A)\cong
\begin{cases}
0 & i=0, \\
\mathbb{Z} & i=1.
\end{cases}
\]
If (i) holds, then $\U_\infty(A)$ has the homotopy type of $\U_\infty(\mathbb{C})$ as an infinite loop space.
If (ii) holds, then $\U_\infty(A)$ has the homotopy type of $\mathbb{Z}\times B\U_\infty(\mathbb{C})$ as an infinite loop space.
\end{lem}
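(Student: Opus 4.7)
The plan is to prove case (i) directly, by constructing an infinite loop map $\U_\infty(\mathbb{C})\to\U_\infty(A)$ realizing a generator of $K_0(A)$, and then to reduce case (ii) to case (i) by applying it to the suspension $SA=C_0(\mathbb{R},A)$ and taking a single deloop.

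For case (i), I would pick a generator $u$ of $K_0(A)\cong\mathbb{Z}$ and use Lemma \ref{lem_Uinfty_to_A} to obtain a $\ast$-homomorphism $f\colon\mathbb{C}\to M_d(A)$ with $f_\ast[p_1]=u$. Passing to stable unitary groups, Proposition \ref{prp_infloop} provides an infinite loop map $f_\ast\colon\U_\infty(\mathbb{C})\to\U_\infty(A)$. This map sends the generator of $K_0(\mathbb{C})\cong\mathbb{Z}$ to a generator of $K_0(A)\cong\mathbb{Z}$, while $K_1(\mathbb{C})$ and $K_1(A)$ both vanish, so $f_\ast$ is an isomorphism on $K_\ast$. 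Since the homotopy groups of $\U_\infty(-)$ are determined $2$-periodically by $K$-theory via Bott periodicity (Proposition \ref{prp_Bottper}), $f_\ast$ is a weak equivalence, hence an equivalence of infinite loop spaces.

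For case (ii), consider $SA=C_0(\mathbb{R},A)$. The suspension isomorphism $K_i(SA)\cong K_{i-1}(A)$ (a consequence of Proposition \ref{prp_Bottper}) shows $K_0(SA)\cong K_1(A)\cong\mathbb{Z}$ and $K_1(SA)\cong K_0(A)=0$, so $SA$ satisfies hypothesis (i). Case (i) then gives $\U_\infty(SA)\simeq\U_\infty(\mathbb{C})$ as infinite loop spaces. Combining with the canonical identification $\U_\infty(SA)\simeq\Omega\U_\infty(A)$, the space $\U_\infty(A)$ is a deloop of $\U_\infty(\mathbb{C})$ in the canonical infinite loop structure of Proposition \ref{prp_infloop}. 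By Bott periodicity this deloop is $\mathbb{Z}\times B\U_\infty(\mathbb{C})$, whence $\U_\infty(A)\simeq\mathbb{Z}\times B\U_\infty(\mathbb{C})$ as infinite loop spaces.

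The main subtlety is the delooping step in case (ii): converting the equivalence $\Omega\U_\infty(A)\simeq\U_\infty(\mathbb{C})$ of infinite loop spaces into an equivalence of their deloops $\U_\infty(A)$ and $\mathbb{Z}\times B\U_\infty(\mathbb{C})$. This rests on both sides carrying the Bott-periodic $\Omega$-spectrum structure from Proposition \ref{prp_infloop}, under which deloops are functorial up to infinite loop equivalence. One should also verify that the $\pi_0\cong\mathbb{Z}$ factor splits off at the infinite loop level; this is unproblematic because $\mathbb{Z}$ is free abelian, so a generator of $\pi_0(\U_\infty(A))=K_1(A)$ can be realized by an infinite loop map $\mathbb{Z}\to\U_\infty(A)$.
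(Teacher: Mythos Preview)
Your argument is correct and matches the paper's proof almost verbatim: case (i) is handled by producing an infinite loop map from $\U_\infty(\mathbb{C})$ via Lemma~\ref{lem_Uinfty_to_A} and Proposition~\ref{prp_infloop}, and case (ii) is reduced to case (i) by passing to $C_0(\mathbb{R},A)$. The only cosmetic difference is in the last step: rather than delooping (whose subtlety you correctly flag), the paper simply applies Bott periodicity once more to write $\U_\infty(A)\simeq\Omega^2\U_\infty(A)=\Omega\bigl(\Omega\U_\infty(A)\bigr)\simeq\Omega\U_\infty(\mathbb{C})\simeq\mathbb{Z}\times B\U_\infty(\mathbb{C})$, which sidesteps the issue entirely.
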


\begin{proof}
Suppose the condition (i).
By Lemma \ref{lem_Uinfty_to_A}, there exists a homotopy equivalence $f\colon\U_\infty(\mathbb{C})\to\U_\infty(A)$, which is an infinite loop map.
When the condition (ii) holds, apply the result for the condition (i) to the algebra $C_0(\mathbb{R},A)$.
This implies that $\U_\infty(C_0(\mathbb{R},A))$ is homotopy equivalent to $\U_\infty(\mathbb{C})$.
By the Bott periodicity, $\U_\infty(A)$ is homotopy equivalent to $\Omega\U_\infty(\mathbb{C})\simeq\mathbb{Z}\times B\U_\infty(\mathbb{C})$.
\end{proof}

\begin{prp}
\label{prp_section_surj}
The inclusion $\U_\infty(C^\ast_u(|\mathbb{Z}^n|))\to\U_\infty(C^\ast(|\mathbb{Z}^n|))$ admits a homotopy section as an infinite loop map if and only if the homomorphism $K_\ast(C^\ast_u(|\mathbb{Z}^n|))\to K_\ast(C^\ast(|\mathbb{Z}^n|))$ is surjective.
\end{prp}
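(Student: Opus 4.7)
The plan is to mirror the argument used in Proposition \ref{prp_section_CuZtoBSW}, where an analogous section was constructed for the inclusion $\U_\infty(C^\ast_u(|\mathbb{Z}|))\to\U_\infty(B^{\mathrm{SW}})$ starting from a $K$-theoretic input. The direction ($\Rightarrow$) is formal: a homotopy section of $\iota\colon\U_\infty(C^\ast_u(|\mathbb{Z}^n|))\to\U_\infty(C^\ast(|\mathbb{Z}^n|))$ forces $\iota_\ast$ to be surjective on every homotopy group, and under the identification of $\pi_\ast(\U_\infty(-))$ with $K_\ast(-)$ (up to reindexing) this is precisely the asserted $K$-theoretic surjectivity.

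For the converse ($\Leftarrow$), suppose $K_\ast(C^\ast_u(|\mathbb{Z}^n|))\to K_\ast(C^\ast(|\mathbb{Z}^n|))$ is surjective. By Lemma \ref{lem_KCZn}, exactly one of $K_0(C^\ast(|\mathbb{Z}^n|))$ and $K_1(C^\ast(|\mathbb{Z}^n|))$ is $\mathbb{Z}$ and the other vanishes; choose $m\in\{0,1\}$ so that $K_0(C_0(\mathbb{R}^m,C^\ast(|\mathbb{Z}^n|)))\cong\mathbb{Z}$ and abbreviate $\tilde A=C_0(\mathbb{R}^m,C^\ast_u(|\mathbb{Z}^n|))$, $\tilde B=C_0(\mathbb{R}^m,C^\ast(|\mathbb{Z}^n|))$. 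Naturality of the Bott map (Section \ref{section_Bott}) preserves surjectivity of $K_\ast$, so I can pick $u\in K_0(\tilde A)$ lifting a generator of $K_0(\tilde B)\cong\mathbb{Z}$. Lemma \ref{lem_Uinfty_to_A} realizes $u$ as a (possibly non-unital) $\ast$-homomorphism $\mathbb{C}\to M_d(\tilde A)$, and Proposition \ref{prp_infloop} promotes this to an infinite loop map $g\colon\U_\infty(\mathbb{C})\to\U_\infty(\tilde A)$.

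By Lemma \ref{lem_stable_htpy_type_U_BU}(i), $\U_\infty(\tilde B)$ is equivalent to $\U_\infty(\mathbb{C})$ as an infinite loop space, so both source and target of the composite $\U_\infty(\mathbb{C})\xrightarrow{g}\U_\infty(\tilde A)\to\U_\infty(\tilde B)$ satisfy $\pi_{2i-1}\cong\mathbb{Z}$ and $\pi_{2i}=0$ for $i\ge 1$. By construction, the composite sends a generator of $\pi_1\cong K_0(\mathbb{C})$ to a generator of $\pi_1(\U_\infty(\tilde B))\cong K_0(\tilde B)$ and is thus an isomorphism on $\pi_1$; since it commutes with the Bott equivalence $\U_\infty(-)\simeq\Omega^2\U_\infty(-)$, this isomorphism propagates to every $\pi_{2i-1}$, making the composite a weak, hence homotopy, equivalence. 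A homotopy inverse composed with $g$ supplies a homotopy section of $\U_\infty(\tilde A)\to\U_\infty(\tilde B)$ that is an infinite loop map. If $m=0$ this concludes the argument; if $m=1$, applying $\Omega$ to this section and then the natural Bott equivalence $\Omega\U_\infty(C_0(\mathbb{R},-))\simeq\U_\infty(C_0(\mathbb{R}^2,-))\simeq\U_\infty(-)$ descends it to the required infinite loop section of $\U_\infty(C^\ast_u(|\mathbb{Z}^n|))\to\U_\infty(C^\ast(|\mathbb{Z}^n|))$.

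The main obstacle is the parity bookkeeping in the $n$ odd case, where the construction is naturally carried out one suspension away from the original inclusion and must be returned via Bott periodicity; this step works precisely because every map in sight is induced by a $\ast$-homomorphism and hence qualifies as an infinite loop map by Proposition \ref{prp_infloop}, so sections transport cleanly across the Bott equivalence.
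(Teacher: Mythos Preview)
Your proof is correct and follows essentially the same approach as the paper's: reduce to the situation where $K_0$ of the target suspension is $\mathbb{Z}$, lift a generator via Lemma~\ref{lem_Uinfty_to_A}, show the resulting composite $\U_\infty(\mathbb{C})\to\U_\infty(\tilde B)$ is a weak equivalence, and read off the section. The only organizational difference is that the paper splits into cases by the parity of $n$ and, for $n$ even, reduces to the odd case by passing to $C_0(\mathbb{R},-)$ first (so the section is ultimately produced on $\U_\infty(C_0(\mathbb{R}^2,-))$ and transported back by Bott), whereas you treat both parities uniformly via the parameter $m\in\{0,1\}$ and handle $n$ even directly at the $\U_\infty(-)$ level without the extra suspension; your version is slightly more streamlined but otherwise identical in substance.
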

\begin{proof}
The only if part is obvious.
For the if part, when $n$ is odd, this follows from Lemma \ref{lem_KCZn} and the same argument as in the proof of Proposition \ref{prp_section_CuZtoBSW}.
When $n$ is even, apply the same argument to the map on the based loop spaces $\U_\infty(C_0(\mathbb{R},C^\ast_u(|\mathbb{Z}^n|)))\to\U_\infty(C_0(\mathbb{R},C^\ast(|\mathbb{Z}^n|)))$.
Then the proposition follows from the existence of the homotopy section of the map on the double loop spaces $\U_\infty(C_0(\mathbb{R}^2,C^\ast_u(|\mathbb{Z}^n|)))\to\U_\infty(C_0(\mathbb{R}^2,C^\ast(|\mathbb{Z}^n|)))$ and the Bott periodicity.
\end{proof}

Now all we have to do is to see that the homomorphism $K_\ast(C^\ast_u(|\mathbb{Z}^n|))\to K_\ast(C^\ast(|\mathbb{Z}^n|))$ is surjective.
Let
\[
B_m=
\ell^\infty(\mathbb{Z}^n,\mathbb{C})\rtimes\mathbb{Z}^m
\]
with respect to the action $\mathbb{Z}^m$ ($m\le n$) on the first $m$ factors of $\mathbb{Z}^n$ and $S_j$ denote the shift on the $j$-th factor.
We obtain the short exact sequences similar to (\ref{align_KAm})
\begin{align}
\label{align_KBm}
0
\to
K_i(B_{m-1})_{S_m}
\to
&K_i(B_m)
\to
K_{1-i}(B_{m-1})^{S_m}
\to
0
\end{align}
for $i=0,1$.
For $n=1,2$, we can see the surjectivity as follows.

\begin{lem}
\label{lem_surj_KCuZ}
The homomorphism $K_1(C^\ast_u(|\mathbb{Z}|))\to K_1(C^\ast(|\mathbb{Z}|))$ is an isomorphism.
\end{lem}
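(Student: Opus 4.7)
The plan is to exploit the naturality of the Pimsner--Voiculescu six-term exact sequence with respect to the inclusion $C^\ast_u(|\mathbb{Z}|)\hookrightarrow C^\ast(|\mathbb{Z}|)$. Writing both algebras as crossed products
\[
C^\ast_u(|\mathbb{Z}|)\cong\ell^\infty(\mathbb{Z},\mathbb{C})\rtimes\mathbb{Z},
\qquad
C^\ast(|\mathbb{Z}|)\cong\ell^\infty(\mathbb{Z},\mathcal{K}(\mathcal{H}))\rtimes\mathbb{Z},
\]
the inclusion is induced by the coefficient inclusion $\mathbb{C}\hookrightarrow\mathcal{K}(\mathcal{H})$ sending $1$ to a fixed rank-one projection; this is $\mathbb{Z}$-equivariant for the shift actions.

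Since $K_1(\ell^\infty(\mathbb{Z},\mathbb{C}))=0=K_1(\ell^\infty(\mathbb{Z},\mathcal{K}(\mathcal{H})))$, the short exact sequences of type (\ref{align_KAm}) and (\ref{align_KBm}), specialized to $m=1$ and $i=1$, collapse into isomorphisms
\[
K_1(C^\ast_u(|\mathbb{Z}|))\;\cong\;\ell^\infty(\mathbb{Z},\mathbb{Z})^{S},
\qquad
K_1(C^\ast(|\mathbb{Z}|))\;\cong\;(\mathbb{Z}^{\mathbb{Z}})^{S},
\]
and in both cases the $S$-invariant subgroup consists precisely of the constant integer sequences, canonically identified with $\mathbb{Z}$.

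Next I would observe that the coefficient inclusion $\mathbb{C}\hookrightarrow\mathcal{K}(\mathcal{H})$ induces, on the $K_0$ groups of $\ell^\infty(\mathbb{Z},-)$, the standard inclusion $\ell^\infty(\mathbb{Z},\mathbb{Z})\hookrightarrow\mathbb{Z}^{\mathbb{Z}}$ of bounded integer sequences into all integer sequences. This is immediate from the pointwise-rank description of these $K_0$ groups; the inclusion is $S$-equivariant and restricts to the identity on constant sequences.

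Combining these observations, naturality of Pimsner--Voiculescu gives a commutative square in which the two vertical isomorphisms are those above and the top horizontal map is the inclusion of $S$-invariants, which is the identity $\mathbb{Z}\to\mathbb{Z}$. Hence the induced map $K_1(C^\ast_u(|\mathbb{Z}|))\to K_1(C^\ast(|\mathbb{Z}|))$ is an isomorphism. There is essentially no obstacle; the only routine verification is that the connecting maps in the two P--V sequences are compatible under the coefficient inclusion, which is a direct consequence of the functoriality of the Pimsner--Voiculescu construction in the coefficient algebra.
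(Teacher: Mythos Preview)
Your proof is correct and follows essentially the same approach as the paper: both arguments use the naturality of the Pimsner--Voiculescu sequences (\ref{align_KAm}) and (\ref{align_KBm}) to obtain the commutative square identifying $K_1(C^\ast_u(|\mathbb{Z}|))\cong\ell^\infty(\mathbb{Z},\mathbb{Z})^S$ and $K_1(C^\ast(|\mathbb{Z}|))\cong(\mathbb{Z}^\mathbb{Z})^S$, with the right vertical map an isomorphism. Your version is simply more explicit about why the induced map on $S$-invariants is the identity on constant sequences, which the paper leaves implicit.
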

\begin{proof}
Consider the commutative square
\[
\xymatrix{
K_1(C^\ast_u(|\mathbb{Z}|)) \ar[r]^-\cong \ar[d]
& \ell^\infty(\mathbb{Z},\mathbb{Z})^S \ar[d]^-\cong \\
K_1(C^\ast(|\mathbb{Z}|)) \ar[r]^-\cong
& (\mathbb{Z}^\mathbb{Z})^S
}
\]
obtained from the exact sequences (\ref{align_KAm}) and (\ref{align_KBm}).
Thus the lemma follows.
\end{proof}

\begin{lem}
\label{lem_surj_KCuZ2}
The homomorphism $K_0(C^\ast_u(|\mathbb{Z}^2|))\to K_0(C^\ast(|\mathbb{Z}^2|))$ is surjective.
\end{lem}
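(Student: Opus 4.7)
The plan is to compare the Pimsner–Voiculescu exact sequences (\ref{align_KBm}) and (\ref{align_KAm}) for $m=2$ and $i=0$, which are related by the inclusion $B_m\hookrightarrow A_m$ induced by the chosen embedding $\mathbb{C}\hookrightarrow\mathcal{H}$. This yields a commutative diagram of short exact sequences
\[
\xymatrix@C=1em{
0 \ar[r] & K_0(B_1)_{S_2} \ar[r] \ar[d] & K_0(C^\ast_u(|\mathbb{Z}^2|)) \ar[r] \ar[d] & K_1(B_1)^{S_2} \ar[r] \ar[d] & 0 \\
0 \ar[r] & K_0(A_1)_{S_2} \ar[r] & K_0(C^\ast(|\mathbb{Z}^2|)) \ar[r] & K_1(A_1)^{S_2} \ar[r] & 0.
}
\]
By the standard five-lemma-style diagram chase, surjectivity of the middle vertical map will follow once the right vertical map is surjective and the bottom-left group is reachable (it will in fact vanish, making the chase trivial).

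First I would compute the bottom row. Applying (\ref{align_KAm}) with $m=1$ and using $K_0(\ell^\infty(\mathbb{Z}^2,\mathcal{K}(\mathcal{H})))\cong \mathbb{Z}^{\mathbb{Z}^2}$ and $K_1=0$, one obtains $K_0(A_1)=0$ and $K_1(A_1)\cong(\mathbb{Z}^{\mathbb{Z}^2})^{S_1}\cong \mathbb{Z}^{\mathbb{Z}}$ (sequences depending only on the second coordinate). Feeding this into (\ref{align_KAm}) with $m=2$, the left-hand group of the bottom row becomes $0$, and the right-hand group becomes $(\mathbb{Z}^{\mathbb{Z}})^{S_2}\cong\mathbb{Z}$ (the $S_2$-constant sequences). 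Thus $K_0(C^\ast(|\mathbb{Z}^2|))\to K_1(A_1)^{S_2}$ is itself an isomorphism onto $\mathbb{Z}$.

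Next I would compute the right-hand column. From (\ref{align_KBm}) with $m=1$, since $K_1(\ell^\infty(\mathbb{Z}^2,\mathbb{C}))=0$, we get $K_1(B_1)\cong\ell^\infty(\mathbb{Z}^2,\mathbb{Z})^{S_1}\cong\ell^\infty(\mathbb{Z},\mathbb{Z})$ (bounded functions of the second coordinate). Taking $S_2$-invariants yields $K_1(B_1)^{S_2}\cong\ell^\infty(\mathbb{Z},\mathbb{Z})^{S_2}\cong\mathbb{Z}$, the constant functions. Naturality of the Pimsner–Voiculescu sequence, together with the fact that the induced map on $K_0$ of the coefficient algebras is simply the inclusion $\ell^\infty(\mathbb{Z}^2,\mathbb{Z})\hookrightarrow\mathbb{Z}^{\mathbb{Z}^2}$, shows that the right vertical arrow $K_1(B_1)^{S_2}\to K_1(A_1)^{S_2}$ is just the identity $\mathbb{Z}\to\mathbb{Z}$ on these constants.

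Finally, surjectivity follows by a direct chase: given $y\in K_0(C^\ast(|\mathbb{Z}^2|))$, its image in $K_1(A_1)^{S_2}\cong\mathbb{Z}$ lifts (through the identity) to $K_1(B_1)^{S_2}$, and hence (since the top row is exact) to some $x\in K_0(C^\ast_u(|\mathbb{Z}^2|))$; the image of $x$ in $K_0(C^\ast(|\mathbb{Z}^2|))$ agrees with $y$ up to the kernel of the right horizontal map, which is $K_0(A_1)_{S_2}=0$. I do not anticipate a genuine obstacle; the only point requiring care is checking naturality of the Pimsner–Voiculescu sequence with respect to the inclusion $\ell^\infty(\mathbb{Z}^2,\mathbb{C})\hookrightarrow\ell^\infty(\mathbb{Z}^2,\mathcal{K}(\mathcal{H}))$, and verifying that under the identifications above the right-hand vertical map really is the identity on $\mathbb{Z}$ rather than multiplication by some integer.
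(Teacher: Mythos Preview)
Your proposal is correct and follows essentially the same approach as the paper: both compare the Pimsner--Voiculescu short exact sequences (\ref{align_KAm}) and (\ref{align_KBm}) at $m=2$ via naturality, observe that the bottom-left term $K_0(A_1)_{S_2}$ vanishes, and that the right vertical map between the invariant subgroups is an isomorphism $\mathbb{Z}\to\mathbb{Z}$, from which surjectivity follows immediately. Your write-up is in fact somewhat more careful than the paper's, since you explicitly flag the need to verify naturality and that the right-hand map is really an isomorphism rather than multiplication by an integer; the paper simply marks that arrow with $\cong$ and leaves the verification implicit.
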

\begin{proof}
When $n=2$, we can compute $K_\ast(B_1)$ by the exact sequence (\ref{align_KBm}) as follows:
\[
K_i(B_1)
\cong
\begin{cases}
\ell^\infty(\mathbb{Z}^2,\mathbb{Z})_{S_1} & i=0,\\
\ell^\infty(\mathbb{Z}^2,\mathbb{Z})^{S_1} & i=1.
\end{cases}
\]
Again by the exact sequences (\ref{align_KAm}) and (\ref{align_KBm}) for $m=2$, we have the commutative diagram
\[
\xymatrix{
0 \ar[r]
& \ell^\infty(\mathbb{Z}^2,\mathbb{Z})_{S_1S_2} \ar[r] \ar[d]
& K_0(C^\ast_u(|\mathbb{Z}^2|)) \ar[r] \ar[d]
& \ell^\infty(\mathbb{Z}^2,\mathbb{Z})^{S_1S_2} \ar[r] \ar[d]^-{\cong}
& 0 \\
0 \ar[r]
& 0 \ar[r]
& K_0(C^\ast(|\mathbb{Z}^2|)) \ar[r]^-{\cong}
& (\mathbb{Z}^{\mathbb{Z}^2})^{S_1S_2} \ar[r]
& 0
}
\]
Thus the homomorphism $K_0(C^\ast_u(|\mathbb{Z}^2|))\to K_0(C^\ast(|\mathbb{Z}^2|))$ is surjective by the right square.
\end{proof}

To determine the homotopy type of $C^\ast_u(|\mathbb{Z}^2|)$, we also need its $K$-theory.

\begin{lem}
\label{lem_KCuZ2}
The $K$-theory $K_1(C^\ast_u(|\mathbb{Z}^2|))$ and the kernel of the homomorphism $K_0(C^\ast_u(|\mathbb{Z}^2|))\to K_0(C^\ast(|\mathbb{Z}^2|))$ are rational vector spaces of uncountable dimension.
\end{lem}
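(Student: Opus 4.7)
The plan is to use the Pimsner--Voiculescu computations from the proof of Lemma \ref{lem_surj_KCuZ2}. The snake lemma applied to the diagram there identifies
\[
V_0\cong\ell^\infty(\mathbb{Z}^2,\mathbb{Z})_{S_1S_2},
\]
while the exact sequence (\ref{align_KBm}) with $i=1$, $m=2$ realizes $V_1$ as an extension
\[
0\to(\ell^\infty(\mathbb{Z}^2,\mathbb{Z})^{S_1})_{S_2}\to V_1\to(\ell^\infty(\mathbb{Z}^2,\mathbb{Z})_{S_1})^{S_2}\to0.
\]
The proof then verifies rationality and uncountable dimension separately for each of these four pieces.

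For rationality, I would first establish that $\ell^\infty(\mathbb{Z}^n,\mathbb{Z})_{S_j}$ is divisible and torsion-free by slicing along the $j$-th coordinate and applying the 1D construction of \cite[Section 5]{KKT} slice by slice with uniform bounds (both the divisibility witness and the torsion lift in the 1D argument have bounds depending only on $\|a\|$, so they assemble to elements of $\ell^\infty(\mathbb{Z}^n,\mathbb{Z})$). Taking a further $S_k$-coinvariant of such a rational vector space yields a rational vector space (quotient), and taking $S_k$-invariants yields one as well: torsion-freeness is automatic, and divisibility follows because in a torsion-free divisible group, any preimage of a fixed element under multiplication by $N$ is itself fixed. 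Hence $V_0$ is rational as a quotient of $\ell^\infty(\mathbb{Z}^2,\mathbb{Z})_{S_1}$, and $V_1$ is rational as an extension of two rational vector spaces.

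For uncountable dimension, $V_1$ is immediate: the $S_1$-invariants of $\ell^\infty(\mathbb{Z}^2,\mathbb{Z})$ are canonically $\ell^\infty(\mathbb{Z},\mathbb{Z})$ via pullback along the second-coordinate projection $\pi\colon\mathbb{Z}^2\to\mathbb{Z}$, so $(\ell^\infty(\mathbb{Z}^2,\mathbb{Z})^{S_1})_{S_2}\cong\ell^\infty(\mathbb{Z},\mathbb{Z})_S$, which has uncountable $\mathbb{Q}$-dimension by \cite[Section 5]{KKT}. For $V_0$, the same pullback produces a map $\bar\pi^\ast\colon\ell^\infty(\mathbb{Z},\mathbb{Z})_S\to V_0$ because $\pi^\ast b$ is $S_1$-invariant and $\pi^\ast(b-Sb)=(1-S_2)\pi^\ast b$. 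To prove $\bar\pi^\ast$ is injective, I would fix an $S$-invariant mean $\mu$ on $\ell^\infty(\mathbb{Z},\mathbb{R})$, form the partial mean $\tilde\mu(a)(j)=\mu(k\mapsto a(k,j))$, and check that $\tilde\mu\circ\pi^\ast=\mathrm{id}$, $\tilde\mu\circ(1-S_1)=0$, and $\tilde\mu\circ(1-S_2)=(1-S)\circ\tilde\mu$. Any integer $b$ vanishing in $V_0$ therefore satisfies $b\in(1-S)\ell^\infty(\mathbb{Z},\mathbb{R})$.

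The main obstacle is the final integrality step: averaging by a real-valued mean destroys integrality. The crucial input is the elementary lemma $\ell^\infty(\mathbb{Z},\mathbb{Z})\cap(1-S)\ell^\infty(\mathbb{Z},\mathbb{R})=(1-S)\ell^\infty(\mathbb{Z},\mathbb{Z})$, which follows from a short telescoping argument: if $b=c-Sc$ with $b$ integer-valued and $c$ real, then partial sums show $c-c(0)\in\ell^\infty(\mathbb{Z},\mathbb{Z})$, yielding an integer witness. With this lemma in hand, injectivity gives $\ell^\infty(\mathbb{Z},\mathbb{Z})_S\hookrightarrow V_0$, and uncountable dimension of $V_0$ follows. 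All remaining steps reduce essentially formally to the 1D machinery of \cite{KKT}.
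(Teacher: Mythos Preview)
Your proof is correct and follows essentially the same route as the paper: both identify $V_0\cong\ell^\infty(\mathbb{Z}^2,\mathbb{Z})_{S_1S_2}$ from the diagram in Lemma~\ref{lem_surj_KCuZ2}, both realize $V_1$ as the extension
\[
0\to(\ell^\infty(\mathbb{Z}^2,\mathbb{Z})^{S_1})_{S_2}\to V_1\to(\ell^\infty(\mathbb{Z}^2,\mathbb{Z})_{S_1})^{S_2}\to0,
\]
and both deduce rationality by reducing to the one-dimensional arguments of \cite{KKT} applied slice by slice. The one substantive difference is your treatment of the uncountable dimension of $V_0$. The paper simply asserts that, since $\ell^\infty(\mathbb{Z}^2,\mathbb{Z})_{S_1}$ is a rational vector space of uncountable dimension (by the \cite{KKT} argument) and $S_2$ acts linearly on it, the further coinvariant remains a rational vector space of uncountable dimension. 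You instead construct an explicit injection $\ell^\infty(\mathbb{Z},\mathbb{Z})_S\hookrightarrow V_0$ via pullback along the second-coordinate projection, verify injectivity using an $S$-invariant mean together with the integrality lemma $\ell^\infty(\mathbb{Z},\mathbb{Z})\cap(1-S)\ell^\infty(\mathbb{Z},\mathbb{R})=(1-S)\ell^\infty(\mathbb{Z},\mathbb{Z})$, and then quote the uncountable dimension of $\ell^\infty(\mathbb{Z},\mathbb{Z})_S$. Your version is more self-contained and makes the uncountability of $V_0$ transparent without asking the reader to reinterpret the \cite{KKT} argument in the iterated setting; the paper's version is terser but leans more heavily on that reference.
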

\begin{proof}
As seen in the proof of Lemma \ref{lem_surj_KCuZ2}, the latter group is isomorphic to $\ell^\infty(\mathbb{Z}^2,\mathbb{Z})_{S_1S_2}$.
The coinvariant $\ell^\infty(\mathbb{Z}^2,\mathbb{Z})_{S_1}$ can be seen to be a rational vector space of uncountable dimension by the same argument as in \cite[Section 5]{KKT}.
Then, since $S_2\colon\ell^\infty(\mathbb{Z}^2,\mathbb{Z})_{S_1}\to\ell^\infty(\mathbb{Z}^2,\mathbb{Z})_{S_1}$ is a linear map on a rational vector space, the coinvariant $\ell^\infty(\mathbb{Z}^2,\mathbb{Z})_{S_1S_2}$ is a rational vector space of uncountable dimension.
For $K_1(C^\ast_u(|\mathbb{Z}^2|))$, we obtain the exact sequence
\[
0
\to
(\ell^\infty(\mathbb{Z}^2,\mathbb{Z})^{S_1})_{S_2}
\to
K_1(C^\ast_u(|\mathbb{Z}^2|))
\to
(\ell^\infty(\mathbb{Z}^2,\mathbb{Z})_{S_1})^{S_2}
\to
0
\]
from (\ref{align_KBm}).
Since $(\ell^\infty(\mathbb{Z}^2,\mathbb{Z})^{S_1})_{S_2}\cong\ell^\infty(\mathbb{Z},\mathbb{Z})_S$ and $\ell^\infty(\mathbb{Z}^2,\mathbb{Z})_{S_1}$ are rational vector spaces, $K_1(C^\ast_u(|\mathbb{Z}^2|))$ is also a rational vector space of uncountable dimension.
\end{proof}

\begin{proof}[Proof of Theorem \ref{thm_decomp_Z2}]
By Proposition \ref{prp_section_surj} and Lemma \ref{lem_surj_KCuZ2}, the inclusion $\U_\infty(C^\ast_u(|\mathbb{Z}^2|))\to\U_\infty(C^\ast(|\mathbb{Z}^2|))$ admits a homotopy section as an infinite loop map.
Let $F_2$ be the homotopy fiber of the inclusion.
Then we have a homotopy equivalence
\[
\U_\infty(C^\ast_u(|\mathbb{Z}^2|))
\simeq
\U_\infty(C^\ast(|\mathbb{Z}^2|))\times F_2
\]
as infinite loop spaces.
By Lemmas \ref{lem_KCZn} and \ref{lem_stable_htpy_type_U_BU}, $\U_\infty(C^\ast(|\mathbb{Z}^2|))$ is homotopy equivalent to $\U_\infty(\mathbb{C})$ as an infinite loop space.
By the naturality of the Bott maps
\[
\beta\colon\U_\infty(C^\ast_u(|\mathbb{Z}^2|))
\xrightarrow{\simeq}
\U_\infty(C_0(\mathbb{R}^2,C^\ast_u(|\mathbb{Z}^2|)))
\quad\text{and}\quad
\beta\colon\U_\infty(C^\ast(|\mathbb{Z}^2|))
\xrightarrow{\simeq}
\U_\infty(C_0(\mathbb{R}^2,C^\ast(|\mathbb{Z}^2|))),
\]
we have the homotopy equivalence
\[
\tilde{\beta}\colon F_2\xrightarrow{\simeq}\Omega^2 F_2
\]
as well.
The homotopy group of $F_2$ can be computed by Lemma \ref{lem_surj_KCuZ2}:
\[
\pi_i(F_2)\cong
\begin{cases}
V_1 & \text{$i$ is even,}\\
V_0 & \text{$i$ is odd,}
\end{cases}
\]
where
\[
V_0
=
\ker[K_0(C^\ast_u(|\mathbb{Z}^2|))\to K_0(C^\ast(|\mathbb{Z}^2|))],
\quad
V_1
=
K_1(C^\ast_u(|\mathbb{Z}^2|))
\]
are rational vector spaces by Lemma \ref{lem_KCuZ2}.
Again as in the proof of \cite[Lemma 5.4]{KKT}, we can find maps
\[
\prod_{i\ge1}^\circ K(V_0,2i-1)
\to
F_2
\quad\text{and}\quad
\prod_{i\ge1}^\circ K(V_1,2i-1)
\to
\Omega F_2
\]
inducing isomorphisms on the odd degree homotopy groups.
Taking the loop of the latter map, we get the composite 
\[
V_1\times\prod_{i\ge1}^\circ K(V_1,2i)
\to
\Omega^2F_2
\xrightarrow{\tilde\beta^{-1}}
F_2,
\]
which induces isomorphisms on the even degree homotopy groups and the group of path components.
Thus, we obtain the homotopy equivalence
\[
V_1\times\prod_{i\ge1}^\circ(K(V_0,2i-1)\times K(V_1,2i))
\to
F_2.
\]
This completes the proof of the theorem.
\end{proof}
Moreover, Lemma \ref{lem_surj_KCuZ} provides another proof of Theorem \ref{thm_decomp} in a similar manner.

\bibliographystyle{alpha}
\bibliography{uniformRoe}
\end{document}